\newtheorem{theorem}{Theorem}[section]
\newtheorem{lemma}[theorem]{Lemma}
\newtheorem{remark}[theorem]{Remark}
\newtheorem{corollary}[theorem]{Corollary}
\newtheorem{proposition}[theorem]{Proposition}
\newcommand{\squig}{\rightsquigarrow}
\def\ld{\mathord{\backslash}}
\def\rd{\mathord{/}}
\begin{document}
\title[On a New Construction of Pseudo BL-Algebras]{On a New Construction of Pseudo BL-Algebras}
\author[Anatolij Dvure\v{c}enskij]{Anatolij Dvure\v censkij$^{1,2}$}
\date{}%
\maketitle
\begin{center}  \footnote{Keywords: Pseudo BL-algebra, pseudo hoop, basic pseudo hoop, $\ell$-group, kite pseudo BL-algebra, subdirect irreducibility.

 AMS classification: 06D35, 03G12, 03B50

This work was supported by  the Slovak Research and Development Agency under contract APVV-0178-11,  grant VEGA No. 2/0059/12 SAV, and
CZ.1.07/2.3.00/20.0051.
 }
Mathematical Institute,  Slovak Academy of Sciences,\\
\v Stef\'anikova 49, SK-814 73 Bratislava, Slovakia\\
$^2$ Depart. Algebra  Geom.,  Palack\'{y} University\\
17. listopadu 12, CZ-771 46 Olomouc, Czech Republic\\

E-mail: {\tt dvurecen@mat.savba.sk}
\end{center}

\begin{abstract}
We present a new construction of a class pseudo BL-algebras, called kite pseudo BL-algebras. We start with a basic pseudo hoop $A$. Using two injective mappings from one set, $J$, into the second one, $I$, and with an identical copy $\overline A$ with the reverse order we construct a pseudo BL-algebra where the lower part is of the form $(\overline A)^J$ and the upper one is $A^I$. Starting with a basic commutative hoop we can obtain even a non-commutative pseudo BL-algebra or a pseudo MV-algebra, or an algebra with non-commuting negations. We describe the construction, subdirect irreducible kite pseudo BL-algebras and their classification.
\end{abstract}

\section{Introduction}

The authors of \cite{GLP} presented pseudo hoops which were originally introduced by Bosbach in \cite{Bos1,Bos2} under
the name ``residuated integral monoids''. Pseudo hoops generalize pseudo BL-algebras \cite{DGI1,DGI2}, pseudo MV-algebras
\cite{GeIo} (= GMV-algebras \cite{Rac}). Pseudo MV-algebras are always intervals in unital $\ell$-groups \cite{Dvu1}, i.e. of the form $[0, u]$, where $u$ is a strong unit. These structures can also be studied in the frame of integral residuated lattices \cite{GaTs}, as every pseudo hoop is a meet-semilattice ordered residuated, integral and divisible monoid.

Pseudo BL-algebras are a non-commutative generalization of BL-algebras introduced in \cite{Haj1} as an algebraic semantics of basic fuzzy logic. Non-commutative versions with a logical background can be found in \cite{Haj2, Haj3}. Pseudo BL-algebras are now intensively studied by many authors also as a special class of pseudo MTL-algebras generalizing MTL-algebras introduced in \cite{EsGo}. In \cite{Dvu2} it was shown that every linear pseudo hoop is an ordinal sum of linear Wajsberg pseudo hoops which have close connections with cones of linear $\ell$-groups; this result generalizes analogous one from \cite{AgMo} proved for linear hoops.

Every pseudo BL-algebra gives two negations, left and right one. In \cite[Prob 3.21]{DGI2}, it was posed an open problem whether every pseudo BL-algebra is good, i.e. whether these two negations commute. Every pseudo MV-algebras or every linear or representable pseudo BL-algebra is always good, \cite{Dvu2}. The problem was answered in negative in \cite{DGK} where an example originally constructed in \cite{JiMo} was used. This example has initiated an interesting research on so-called kite pseudo BL-algebras, see \cite{DvKo}. The example by Jipsen and Montagna resembles the kite shape therefore, it gave the name for kite pseudo BL-algebras. In \cite{DvKo}, a starting point was an $\ell$-group with two injections $\lambda,\rho: J \to I$ and with  the positive and negative cones. The algebra with a lower part $(G^+)^J$ and an upper part $(G^+)^I$ was called a kite and it is always a pseudo BL-algebra, in special cases a non good one or even a pseudo MV-algebra.
If we take into account that the negative cone of each $\ell$-group can be viewed as a special kind of a basic pseudo BL-algebra, in the present paper we introduced the kite pseudo BL-algebras starting with basic pseudo hoops. We modified the construction from \cite{DvKo} to this more general situation because there are basic pseudo hoops that are not negative cones of any $\ell$-group.
In this paper, we extend results from \cite{DvKo}. The ideas of the proofs resemble the original proofs but the present proofs have to be modified to a completely new situation. The basic results are connected with subdirectly irreducible kites, their characterizations and classifications. In particular, we have non-linear kites that are subdirectly irreducible.

The present paper gives a new large supply of interesting examples of pseudo BL-algebras, and we hope that it opens a new door into theory of pseudo BL-algebras.

The paper is organized as follows. Section 2 gathers the basic notions and results on pseudo hoops, pseudo BL-algebras and pseudo MV-algebras. Section 3 presents the basic construction of a kite pseudo BL-algebra starting with a basic pseudo hoop. The resulting algebra can be even a pseudo MV-algebra or a pseudo BL-algebra that is not good. Section 4 characterizes subdirectly irreducible kites and we present their classification. In particular, we show that every kite is subdirectly embeddable into a product of subdirectly irreducible kites. Some examples and open problems are presented, too.

\section{Elements of Pseudo Hoops and Pseudo BL-Algebras }

According to \cite{GLP}, an algebra $\mathbf A=(A;\cdot,\ld,\rd,1)$ of type
$\langle 2,2,2,0\rangle$ is a {\it pseudo hoop} if the following holds, for all $a,b \in A$
\begin{enumerate}
\item[(i)] $a \cdot 1 = 1\cdot a=a$;
\item[(ii)] $a\ld a=1=a\rd a$;
\item[(iii)] $c\rd (a\cdot b) = (c\rd b)\rd a$;
\item[(iv)] $(a\cdot b)\ld c= b \ld(a\ld c)$;
\item[(v)] $(b\rd a)\cdot a= (a\rd b)\cdot a= a \cdot (a\ld b)=b\cdot (b\ld a)$.
\end{enumerate}

This means that $(A;\cdot,\ld,\rd, 1)$ is a residuated monoid, (i.e. $\cdot$ is associative, with unit element $1,$ and $x\cdot y \le z$ iff $y \le x\ld z $ iff $x \le z\rd y$ for all $x,y,z \in A$). For simplicity, we will write $xy$ instead of $x \cdot y$. We note that in \cite{GLP} there are used arrows $\to$ and $\squig$ which are connected with divisions as follows: $y\to z=z\rd y$ and $y \squig z = y\ld z$. A pseudo hoop $\mathbf A$ is non-trivial if $A\ne \{1\}$.

The operations $\ld$ and $\rd$ are called
\emph{left division} (or \emph{right residuation}) and
\emph{right division} (or \emph{left residuation}), respectively.
We will assume that $\cdot$ has higher binding priority than $\ld$ and $\rd$,  which in turn bind stronger than the lattice connectives.

Then the relation $\le$ defined by $a\le b$ iff $a\ld b =1$ (iff $b\rd a=1)$ is a partial order on $A,$ in addition $a\le b$ iff there is $c \in A$ such that $a=c\cdot b$, and $A$ is a $\wedge$-lattice because $a\wedge b = (b\rd a)\cdot a= (a\rd b)\cdot a= a \cdot (a\ld b)=b\cdot (b\ld a)$ with the top element $1$.

By \cite[Rem 2.3]{GLP}, the operation $\cdot$ is commutative iff $\ld=\rd$, and in such a case, $\mathbf A$ is said to be a {\it hoop}.

Let $\mathbf G=(G;\cdot,^{-1},e,\le)$ be an $\ell$-group (= lattice ordered group) written multiplicatively with an inversion $^{-1}$ and the identity element $e$, equipped with a lattice order $\le$ such that $a\le b$ entails $cad\le cbd$ for all $c,d \in G$. We denote $G^+=\{g \in G: e\le g\}$ and $G^-=\{g \in G: g \le e\}$  the positive and negative cone, respectively, of $G$. An element $ u \in G^+$ is said to be a {\it strong unit} if given $g \in G$ there is an integer $n\ge 1$ such that $g \le u^n$. A pair $(\mathbf G,u)$, where $\mathbf G$ is an $\ell$-group with a fixed strong unit $u$, is said to be a {\it unital $\ell$-group}.

For example, let $\mathbf G=(G;\cdot,^{-1},e,\le)$ be an $\ell$-group. If we endow the negative cone $G^-$ with binary operations $a\cdot b = ab$, $a\ld b = (a^{-1}b)\wedge e$ and $a\rd b=(ab^{-1})\wedge e,$ then $(G^-;\cdot,\ld,\rd,e)$ is an example of a pseudo hoop.

A pseudo BL-algebra is a special kind of a pseudo hoop with an additional fixed element $0$ introduced in \cite{DGI1, DGI2}; we present an equivalent definition, see e.g. \cite{DvKo}:  An algebra $\mathbf A=(A;\cdot,\ld,\rd,\wedge,\vee, 0,1)$ of type $\langle 2,2,2,2,2,0,0\rangle $ is a {\it pseudo BL-algebra} if

\begin{enumerate}
\item[(i)] $(A;\wedge,\vee,0,1)$ is a bounded lattice;
\item[(ii)] $(A;\cdot,\wedge,\vee,1)$ is a residuated monoid;
\item[(iii)] $x(x\ld (x\wedge y)) = x\wedge y = ((x\wedge y)\rd x)x$;
\item[(iv)] $x(x\ld y) = x\wedge y = (y\rd x)x$;\quad (divisibility)
\item[(v)]$x\ld y \vee y\ld x = 1 = y\rd x \vee x\rd y$ \quad (prelinearity).
\end{enumerate}

We define two negations: $x^-=0\rd x$ and $x^\sim =x\ld 0$ of $x \in A,$ and we say that a pseudo BL-algebra is {\it good} if $x^{-\sim}=x^{\sim-}$ for all $x \in A$. A {\it pseudo MV-algebra} is a good BL-algebra satisfying $x^{-\sim}=x=x^{\sim-}$ for all $x \in A$. For more information about pseudo MV-algebras (equivalently, for GMV-algebras), see \cite{GeIo,Rac}.

For example, if $(\mathbf G,u)$ is a unital $\ell$-group, we endow the set  $G[0,u]:=\{g \in G \colon e\le g \le u\}$ with $a\cdot b= (au^{-1}b)\vee e$, $b\rd a= (ba^{-1}u)\wedge u$, $a\ld b= (ua^{-1}b)\wedge u$, then $\Gamma(\mathbf G,u)=(G[0,u];\cdot,\ld,\rd,\wedge ,\vee, e,u)$ is an example of pseudo MV-algebras. Due to \cite{Dvu1}, for every pseudo MV-algebra $\mathbf A$, there is a unique (up to isomorphism of unital $\ell$-groups) unital $\ell$-group $(\mathbf G,u)$ such that $\mathbf A \cong \Gamma(\mathbf G,u)$, and $(\mathbf G,u) \mapsto \Gamma(\mathbf G,u)$ defines a categorical equivalence between the category of unital $\ell$-groups and the variety of pseudo MV-algebras.

It was an open problem in \cite[Prob 3.21]{DGI2} whether every pseudo BL-algebra is good, and it was answered in \cite{DGK} in negative. Thus the variety of good pseudo BL-algebras is a proper subvariety of the variety of pseudo BL-algebras.

According to \cite{GLP}, we say that a pseudo hoop $\mathbf A=(A;\cdot,\ld,\rd,1)$ is {\it basic} if it satisfies the following two conditions:
\begin{enumerate}
\item[(i)] $c \rd(b\rd a)\le c\rd (c \rd (a\rd b))$;
\item[(ii)] $(a\ld b)\ld c \le ((b\ld a)\ld c)\ld c$.
\end{enumerate}

Then $A$ is a distributive lattice such that $((b\rd a)\ld b) \wedge (((a\rd b)\ld a) =a\vee b = (b\rd (a\ld b)\wedge (a\rd (b\ld a))$ and by \cite[Prop 4.10]{GLP}, the variety of bounded basic pseudo hoops is termwise equivalent to the variety of pseudo BL-algebras. For example, $(G^-;\cdot,\ld,\rd,e)$ is a basic pseudo hoop.

It is straightforward to verify that any linearly ordered pseudo hoop and hence any representable pseudo hoop is basic. We notice that not every pseudo hoop is basic, \cite[Rem 5.10]{GLP} (take $A_1$ a nonlinear pseudo hoop and
$A_2 = 2^1$, then the ordinal sum $A = A_1\bigoplus A_2$ ($\bigoplus$ denotes ordinal sum) is not basic), and not all pseudo BL-algebras are representable.

We note that according to \cite{GLP}, a special basic hoop is a {\it Wajsberg} pseudo hoop $\mathbf A=(A; \cdot, \ld,\rd,1)$ such that the following identities hold
\begin{enumerate}
\item[(W1)] $(b\rd a)\ld b= (a\rd b)\ld a$;
\item[(W2)] $b\rd (a\ld b)=a\rd (b\ld a).$
\end{enumerate}

We note that a subset $F$ of $A$ is said to be a {\it filter} of $\mathbf A$ if (i) $a,b \in F$ implies $ab\in F$, and (ii) $a\le b$, $a\in F,$ $b \in A$ imply $b \in F$.  Any filter of $\mathbf A$ is a pseudo subhoop of $\mathbf A$. A filter $F$ that is a proper subset of $A$ is {\it maximal} if there is no other proper filter properly containing $F$.

A filter $F$ is {\it normal} if, for all $a,b \in A$, $b\rd a \in F$ iff $a\ld b\in F$. Equivalently, $a\cdot F=\{ab: b \in F\}=F\cdot a =\{ca: c \in F\}.$  The singleton $\{1\}$ and $A$ are always normal filters of $\mathbf A$. According to \cite[Prop 3.15]{GLP}, there is a one-to-one correspondence between normal ideals of a pseudo hoop $\mathbf A$ and congruences on $\mathbf A$: If $F$ is a normal ideal,
then $\approx_F$, defined by $a\approx_F b$ iff $a\ld b, b\ld a\in F$, is a congruence; conversely, if $\approx$ is a congruence, then $F_\approx =\{a\in A: a\approx 1\}$ is a normal filter of $\mathbf A$. Hence, to prove that a pseudo hoop $\mathbf A$ is subdirectly irreducible, it is equivalent to prove that it is either trivial or it contains the least non-trivial normal filter $F\ne \{1\}$.

\section{Construction of Kite Pseudo BL-algebras}

Motivating by \cite{DvKo}, we present a new construction of kite pseudo BL-algebras starting now with basic pseudo hoops instead of $\ell$-groups. We show that in particular cases we can obtain pseudo MV-algebras or pseudo BL-algebras which are not good.

Take two index sets $J$ and $I$ such that $|J|\le |I|$ and let $\lambda,\rho: J \to I$ be injections. Notice that the case $J=I$ is not excluded.

Suppose that $\mathbf A = (A;\cdot,\ld,\rd,1)$ is a basic pseudo hoop. We denote by $\overline{A}$ an identical copy of $A$ whose elements are of the form $\bar a$ for all $a\in A$, that is $\overline{A} :=\{\bar a\colon a \in A\}$. We assume that $\bar a= \bar b$ iff $a=b$.

We define two special elements $0 = \bar 1^J:=\langle \bar 1_j\colon j \in J\rangle$ and $1=  1^I:= \langle 1_i\colon i \in I\rangle$, where $1_i=1=1_j$ for all $j \in J$ and $i \in I$. The elements of $A^I$ will be denoted by $\langle a_i\colon i \in I\rangle$ and ones of $(\overline A)^J$ by $\langle \bar f_j\colon j \in J\rangle$, where $a_i,f_j \in A$ for all $j\in J$ and $i\in I$. We set $(\overline{A})^J \uplus A^I$ as a disjoint union with a lower part $(\overline A)^J$ and an upper part $A^I$. We order $A^I$ by the coordinate ordering and $\langle \bar f_j\colon j \in J\rangle \le \langle \bar g_j\colon j \in J\rangle$ iff $g_j \le f_j$ for all $j \in J,$ and let $x\le y$ for all $x \in \overline A^J$ and $y \in A^I$. So that the universe $(\overline A)^J \uplus A^I$ is a bounded lattice.

We define a multiplication on $(\overline A)^J \uplus A^I$ as follows

\begin{align*}
\langle a_i\colon i\in I\rangle\cdot\langle b_i\colon i\in I\rangle &=
  \langle a_ib_i\colon i\in I\rangle\\
\langle  a_i\colon i\in I\rangle\cdot\langle \bar f_j\colon j\in J\rangle &=
  \langle \overline{f_j \rd a_{\lambda(j)}}\colon j\in J\rangle\\
\langle \bar f_j\colon j\in J\rangle\cdot\langle  a_i\colon i\in I\rangle &=
  \langle \overline{a_{\rho(j)}\ld f_j}\colon j\in J\rangle\\
\langle \bar f_j\colon j\in J\rangle\cdot\langle \bar g_j\colon j\in J\rangle &=
  \langle \bar 1_j\colon j\in J\rangle = 0.\\
\end{align*}

Divisions, $\ld$ and $\rd$, are defined by
\begin{align*}
\langle a_i\colon i\in I\rangle\ld\langle b_i\colon i\in I\rangle &=
 \langle a_i\ld b_i\colon i\in I\rangle\\
\langle b_i\colon i\in I\rangle\rd\langle a_i\colon i\in I\rangle &=
 \langle b_i\rd a_i\colon i\in I\rangle\\
\langle a_i\colon i\in I\rangle\ld\langle \bar f_j\colon j\in J\rangle &=
 \langle \overline{f_ja_{\lambda(j)}}\colon j\in J\rangle\\
\langle \bar f_j\colon j\in J\rangle\rd\langle a_i\colon i\in I\rangle &=
 \langle \overline{a_{\rho(j)}f_j}\colon j\in J\rangle\\
\langle \bar f_j\colon j\in J\rangle\ld\langle \bar g_j\colon j\in J\rangle &= \langle a_i\colon i\in I\rangle,\\
\text{ where } a_i &=\begin{cases}
f_{\rho^{-1}(i)}\rd g_{\rho^{-1}(i)} &
\text{ if } \rho^{-1}(i) \text{ is defined}\\
1 & \text{ otherwise,}
\end{cases}\\
\langle \bar g_j\colon j\in J\rangle\rd\langle \bar f_j\colon j\in J\rangle &= \langle b_i\colon i\in I\rangle,\\
\text{ where } b_i &=\begin{cases}
g_{\lambda^{-1}(i)}\ld f_{\lambda^{-1}(i)} &
\text{ if } \lambda^{-1}(i) \text{ is defined}\\
1 & \text{ otherwise},
\end{cases}\\
\langle a_i\colon i\in I\rangle\rd \langle \bar f_j\colon j\in
J\rangle &=(1)^I= \langle \bar f_j\colon j\in J\rangle \ld \langle
a_i\colon i\in I\rangle.
\end{align*}

Without any unambiguous problems, we use the same notations for multiplications and divisions in the basic pseudo hoop as well as in  $(\overline A)^J \uplus A^I$.

\begin{theorem}\label{th:3.1}
Let $\mathbf A=(A;\cdot,\ld,\rd,1)$ be a basic pseudo hoop. Let $\lambda,\rho:J \to I$ be injections. Then $((\overline A)^J \uplus A^I; \cdot,\ld,\rd,\wedge,\vee, 0,1)$, where $\cdot,\ld,\rd,\wedge,\vee, 0,1$ were defined above, is a pseudo BL-algebra.
\end{theorem}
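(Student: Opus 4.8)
The plan is to verify the five defining conditions of a pseudo BL-algebra directly, organizing every computation by a case split according to whether each argument lies in the lower block $(\overline A)^J$ or the upper block $A^I$. Condition (i) is immediate: since $\mathbf A$ is basic it is a distributive lattice, so $A^I$ and the order-reversed $(\overline A)^J$ are lattices, and gluing them with every lower element below every upper one yields a bounded lattice with bottom $0=\bar 1^J$ and top $1=1^I$. For (ii) I would first check that $1^I$ is a two-sided unit and that $0$ is absorbing (both follow from $f\rd 1=f=1\ld f$ and $a\ld 1=1=1\rd a$ in $A$). One could alternatively aim to show the kite is a bounded basic pseudo hoop and invoke the termwise equivalence recalled in Section 2, but the direct route seems cleaner.

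Still for (ii) I would then establish associativity of $\cdot$ through the eight cases. The genuinely mixed cases reduce to pseudo hoop axioms: (upper, upper, lower) to $c\rd(ab)=(c\rd b)\rd a$, (lower, upper, upper) to $(ab)\ld c=b\ld(a\ld c)$, and (upper, lower, upper) to the general residuation law $b\ld(f\rd a)=(b\ld f)\rd a$ valid in any residuated monoid; every case with two adjacent lower factors collapses to $0$ on both sides. Next I would verify the adjunction $xy\le z \iff y\le x\ld z \iff x\le z\rd y$ case by case. Whenever the triple straddles the two blocks the statement is forced because one side is the top $1^I$ or the bottom $0$; the content sits in the upper--upper case (coordinatewise residuation in $A$), the mixed upper$\ld$lower / lower$\rd$upper cases, and the lower$\ld$lower / lower$\rd$lower cases. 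In this last family I would use that $\rho$ (respectively $\lambda$) is injective, so that $\rho^{-1}(\rho(j))=j$ makes the defining clause $a_{\rho(j)}=f_j\rd g_j$ match the coordinatewise inequality, while indices outside $\rho(J)$ carry the value $1$ and impose no constraint.

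The remaining conditions (iii), (iv) (divisibility) and (v) (prelinearity) are where the real work lies. Each reduces coordinatewise to an identity in $A$: inside the upper block these are just the divisibility and prelinearity already present in the pseudo hoop $A$, and across blocks they hold trivially because the relevant residual is $1^I$ and the relevant product is $0$. The decisive case is lower--lower. There $x\wedge y$ equals $\langle\overline{f_j\vee g_j}\rangle$ (the reversed order turns the coordinatewise $A$-meet into an $A$-join), whereas $x\cdot(x\ld y)$ and $(y\rd x)\cdot x$ compute to $\langle\overline{(f_j\rd g_j)\ld f_j}\rangle$ and $\langle\overline{f_j\rd(g_j\ld f_j)}\rangle$; matching these with $\langle\overline{f_j\vee g_j}\rangle$ is exactly an instance of the join identities for basic pseudo hoops recorded in Section 2, and prelinearity reduces similarly to the prelinearity of $A$.

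I expect this lower--lower verification to be the main obstacle. It is the only point that genuinely consumes the basic hypothesis on $A$ rather than the bare pseudo hoop axioms, and it demands careful bookkeeping of the two injections $\lambda,\rho$ together with their partial inverses, so that in each coordinate the correct index map and the correct one-sided division appear. Once the lower--lower divisibility and prelinearity are in hand, assembling all the cases gives that $((\overline A)^J \uplus A^I;\cdot,\ld,\rd,\wedge,\vee,0,1)$ satisfies (i)--(v) and is therefore a pseudo BL-algebra.
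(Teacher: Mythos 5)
Your proposal is correct and follows essentially the same route as the paper: a direct case-by-case verification of the pseudo BL-axioms, using the residuation identity $x\ld(y\rd z)=(x\ld y)\rd z$ for the mixed associativity cases and the join identities of basic pseudo hoops (e.g.\ $(f\rd g)\ld f=f\vee g$) for the decisive lower--lower divisibility case, with prelinearity likewise reducing to that of $\mathbf A$. You correctly locate where the basic hypothesis is consumed, which is exactly the point the paper's proof emphasizes.
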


\begin{proof}
As it was mentioned, $(\overline A)^J \uplus A^I$ is a bounded lattice with $0=\langle \bar 1_j\colon j \in J\rangle$ and $1=\langle 1_i\colon i \in I\rangle$. To show that multiplication is associative, we observe that if  triples are from $A^I$, then multiplication is associative, the same is true if triples are from $(\overline A)^J$. If triples involve at least two elements from $(\overline A)^J$, they associate because both products equal $0$. The remaining inspection is to verify cases when one element of the triple is from $(\overline A)^J$ and two from $A^I.$ One such case is the following; here we use the fact that, for $x,y,z \in A,$ we have $x\ld (y\rd z)=(x\ld y)\rd z,$ see e.g. \cite[Lem 2.1(vii)]{GaTs}:

\begin{align*}
(\langle a_i\colon i\in I\rangle\cdot\langle \bar f_j\colon j\in J\rangle)\cdot\langle b_i\colon i\in I\rangle &=
\langle \overline{f_j \rd a_{\lambda(j)}}\colon j\in J\rangle\cdot\langle b_i\colon i\in I\rangle\\
&= \langle\overline{b_{\rho(j)}\ld (f_j \rd a_{\lambda(j)})}\colon j\in J\rangle\\
&= \langle\overline{(b_{\rho(j)}\ld f_j) \rd a_{\lambda(j)})}\colon j\in J\rangle\\
&= \langle a_i\colon i\in I\rangle\cdot\langle \overline{b_{\rho(j)}\ld f_j}\colon j\in J\rangle\\
&= \langle a_i\colon i\in I\rangle\cdot(\langle \bar f_j\colon j\rangle\cdot\langle b_i\colon i\rangle).
\end{align*}

The rest follows similar reasonings. It is clear that $1=\langle  1_i\colon i \in I\rangle$ is a neutral element for multiplication.

It is straightforward to verify that $\ld$ and $\rd$ are residuation operators.

Now we verify divisibility. We show the following case

\begin{align*}
\langle \bar f_j\colon j\in J\rangle\cdot(\langle \bar f_j\colon j\in J\rangle\ld\langle \bar g_j\colon j\in J\rangle)
&= \langle \bar f_j\colon j\in J\rangle\cdot\langle a_i\colon i\in I\rangle \\
&= \langle \overline{a_{\rho(j)}\ld f_j}\colon j\in J\rangle,
\end{align*}
where
$$
a_ican appear
 =\begin{cases}
f_{\rho^{-1}(i)}\rd g_{\rho^{-1}(i)} &
\text{ if } \rho^{-1}(i) \text{ is defined}\\
1 & \text{ otherwise}
\end{cases}
$$
but, observe that $\rho^{-1}(\rho(j))$ is always defined and equals $j$,
so calculating further we obtain $a_{\rho(j)}=f_j\rd g_j$ and
$\langle \overline{a_{\rho(j)}\ld f_j}\colon j\in J\rangle = \langle \overline{(f_j\rd g_j)\ld f_j}\colon j \in J\rangle = \langle \overline{f_j\vee g_j}\colon j \in J\rangle = \langle \bar f_j\colon j \in J\rangle \wedge \langle \bar g_j\colon  j \in J\rangle$. In the same way we proceed for the dual identity.

Now we show prelinearity.  If both elements $x,y$ are from $A^I$, prelinearity holds due to coordinatewise calculations and due to prelinearity of the basic pseudo hoop \cite[Lem 4.5]{GLP}. If $x \in A^I$ and $y\in (\overline A)^J$ or $y \in A^I$ and $x \in  (\overline A)^J,$ prelinearity holds trivially. We have to verify only the following case

$$
\langle \bar f_j\colon j\in J\rangle\ld\langle \bar g_j\colon j\in J\rangle\vee
\langle \bar g_j\colon j\in J\rangle\ld\langle \bar f_j\colon j\in J\rangle =\langle a_i\colon i \in I\rangle \vee \langle b_i\colon i \in I\rangle
$$
which yields two cases: (1) if $\rho^{-1}(i)$ is defined, we have
\begin{align*}
&  f_{\rho^{-1}(i)}\rd  g_{\rho^{-1}(i)}
\vee  g_{\rho^{-1}(i)}\rd f_{\rho^{-1}(i)}=1_i\\
\end{align*}
and (2) if $\rho^{-1}(i)$ is not defined, we have
$$
a_i \vee b_i = 1\vee 1 =1_i.
$$

Summarizing all steps, we see that the algebra in question is a pseudo BL-algebra as was claimed.
\end{proof}

If the set $J$ is a singleton, $|I|=2$ and $A=\mathbb Z^-$, where $\mathbb Z$ is the group of integers, the universe of the corresponding pseudo BL-algebra resembles the shape of a kite, and similarly as in \cite{DvKo}, the corresponding algebra $K_{I,J}^{\lambda,\rho}(\mathbf{A})= ((\overline A)^J \uplus A^I; \cdot,\ld,\rd,\wedge,\vee, 0,1)$ is said to be a {\it kite pseudo BL-algebra} corresponding to the basic pseudo hoop $\mathbf A$, or simply a kite. The kite pseudo BL-algebra corresponding to an $\ell$-group $\mathbf G$ studied in \cite{DvKo} is a special case of our situation when the basic pseudo hoop $\mathbf A$ is the basic pseudo hoop corresponding to the negative cone $G^-$, i.e., $\mathbf A=(G^-;\cdot,\ld,\rd,e)$. Therefore, using these negative cones, sets $J,I$ and injective mappings $\lambda,\rho:J \to I$, we can obtain a much larger supply of kite pseudo BL-algebras than the class of kites using only negative cones.

We obtain a special example of kites if $A=\{1\},$ then $K^{\lambda,\rho}_{I,J}(\mathbf A)$ is a two-element Boolean algebra. The same is true for the kite $K^{\emptyset,\emptyset}_{\emptyset,\emptyset}(\mathbf A)$ for any basic pseudo hoop $\mathbf A$.

Now we show that the kite pseudo BL-algebra can be also a pseudo MV-algebra.

\begin{proposition}\label{pr:3.2}
Let $\mathbf{A}$ be a basic pseudo hoop, and
suppose $|I| = |J|$ and $\lambda,\rho$ are bijections. Then
$K_{I,J}^{\lambda,\rho}(\mathbf{A})$
is a pseudo MV-algebra.
\end{proposition}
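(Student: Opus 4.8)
The plan is to lean on Theorem~\ref{th:3.1}: the kite $K_{I,J}^{\lambda,\rho}(\mathbf A)$ is already a pseudo BL-algebra, so it suffices to verify the single extra pseudo MV-condition $x^{-\sim}=x=x^{\sim-}$ for every element $x$, where $x^-=0\rd x$ and $x^\sim=x\ld 0$ with $0=\langle \bar 1_j\colon j\in J\rangle$. Observe that this one condition also gives goodness for free, since $x^{-\sim}$ and $x^{\sim-}$ then both equal $x$. The only facts about the underlying hoop $\mathbf A$ that I expect to need are the elementary identities $1\ld a=a=a\rd 1$, valid in any pseudo hoop because $1$ is the neutral top element.

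The computation splits according to whether $x$ lies in the upper part $A^I$ or the lower part $(\overline A)^J$, and in each case one simply feeds $0$ and $x$ through the displayed formulas for $\rd$ and $\ld$. For $x=\langle a_i\colon i\in I\rangle\in A^I$, the formula for $\langle \bar f_j\rangle\rd\langle a_i\rangle$ with all $f_j=1$ yields $x^-=\langle \overline{a_{\rho(j)}}\colon j\in J\rangle$; then the formula for $\langle \bar f_j\rangle\ld\langle \bar g_j\rangle$ with $g_j=1$ returns in coordinate $i$ the value $a_i\rd 1=a_i$, provided $\rho^{-1}(i)$ is defined. Dually, $x^{\sim-}$ reduces to $1\ld a_i=a_i$ whenever $\lambda^{-1}(i)$ is defined. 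For $x=\langle \bar f_j\colon j\in J\rangle\in(\overline A)^J$, the same formulas produce the upper element $x^-=\langle f_{\lambda^{-1}(i)}\colon i\in I\rangle$ and then $x^{-\sim}=\langle \overline{f_{\lambda^{-1}(\lambda(j))}}\colon j\in J\rangle$, with the analogous computation for $x^{\sim-}$ running through $\rho$.

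The crucial point, and the only genuinely nontrivial one, is exactly where bijectivity enters. The two division formulas on the lower part carry an ``otherwise $1$'' branch that is active precisely on those coordinates $i\in I$ lying outside the ranges of $\rho$, respectively $\lambda$. Under mere injectivity those coordinates survive and the negations collapse them to $1$, which destroys involutivity; this is what makes a general kite merely a pseudo BL-algebra. When $\lambda,\rho$ are bijections they are in particular surjective, so every $i\in I$ has a preimage and the ``otherwise'' branch never fires; moreover $\rho^{-1}(\rho(j))=j$ and $\lambda^{-1}(\lambda(j))=j$, so each index returns to itself. Combining this with $1\ld a=a=a\rd 1$ gives $x^{-\sim}=x=x^{\sim-}$ in both cases, which is the desired pseudo MV-condition and completes the verification.

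I expect the surjectivity bookkeeping to be the main obstacle in the sense that it is the conceptual heart of the argument, while the remaining manipulations are routine substitutions into the defining formulas; care is only needed to keep the index substitutions $\rho^{-1}\circ\rho=\mathrm{id}$ and $\lambda^{-1}\circ\lambda=\mathrm{id}$ straight when passing between the upper and lower parts.
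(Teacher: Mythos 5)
Your proposal is correct and follows essentially the same route as the paper: both reduce the claim to verifying $x^{-\sim}=x=x^{\sim-}$ by direct substitution into the defining formulas for $\ld$ and $\rd$, treating the upper part $A^I$ and the lower part $(\overline A)^J$ separately and using $1\ld a=a=a\rd 1$ together with $\rho^{-1}\circ\rho=\mathrm{id}$ and $\lambda^{-1}\circ\lambda=\mathrm{id}$. Your explicit remark that surjectivity is exactly what prevents the ``otherwise $1$'' branch from firing is a point the paper leaves implicit, but the argument is the same.
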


\begin{proof}
To show that $K_{I,J}^{\lambda,\rho}(\mathbf{A})$ is a pseudo MV-algebra, it is necessary to show that $x^{-\sim}=x=x^{\sim-}$ for all $x \in K_{I,J}^{\lambda,\rho}(\mathbf{A})$.

Check
\begin{align*}
(\langle a_i \colon i \in I\rangle)^{-\sim} &= (\langle \bar 1_j\colon j \in J\rangle \rd \langle a_i \colon i \in I\rangle) \ld \langle \bar 1_j\colon j \in J\rangle \\&= \langle \bar a_{\rho(j)} \colon i \in I\rangle \ld \langle \bar 1_j\colon j \in J\rangle = \langle a_i\rd 1_i \colon i \in I\rangle \\&=
\langle a_i \colon i \in I\rangle.
\end{align*}

\begin{align*}
(\langle a_i \colon i \in I\rangle)^{\sim-} &= \langle \bar 1_j\colon j \in J\rangle \rd (\langle a_i \colon i \in I\rangle \ld \langle \bar 1_j\colon j \in J\rangle) \\&= \langle \bar 1_j\colon j \in J\rangle\rd \langle \bar a_{\lambda(j)}\colon j \in J\rangle\\
&= \langle 1_i\ld a_i\colon i \in I\rangle = \langle a_i \colon i \in I\rangle.
\end{align*}

In the same way we proceed with $x = \langle \bar f_j \colon j \in J\rangle$.
\end{proof}

Now we characterize kites which are good pseudo BL-algebras and pseudo MV-algebras, respectively. As it was already mentioned, there was an open problem in \cite{DGI2} whether every pseudo BL-algebra is good, and it was answered negatively in \cite{DGK}. Here we present another class of pseudo BL-algebras that are not good.

\begin{theorem}\label{th:3.3}
Let $\mathbf{A}$ be a non-trivial basic pseudo hoop and
$K_{I,J}^{\lambda,\rho}(\mathbf{A})$ a kite.
\begin{enumerate}
\item
$K_{I,J}^{\lambda,\rho}(\mathbf{A})$ is good if and only if
$\lambda(J)=\rho(J)$.
\item $K_{I,J}^{\lambda,\rho}(\mathbf{A})$ is a pseudo MV-algebra
if and only if $\lambda(J)=I=\rho(J)$.
\end{enumerate}
\end{theorem}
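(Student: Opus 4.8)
The plan is to reduce everything to explicit coordinate formulas for the two negations and their iterates, and then read off both equivalences by comparing coordinates, using the non-triviality of $\mathbf A$ to upgrade pointwise equalities of hoop-elements into equalities of the index sets $\lambda(J)$ and $\rho(J)$.

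First I would record the single negations. Since $0=\langle \bar 1_j\colon j\in J\rangle$, the division tables together with the neutrality of $1$ for $\cdot$, $\ld$ and $\rd$ give, for an upper element $x=\langle a_i\colon i\in I\rangle$,
\begin{align*}
x^- &= 0\rd x = \langle \overline{a_{\rho(j)}}\colon j\in J\rangle,\\
x^\sim &= x\ld 0 = \langle \overline{a_{\lambda(j)}}\colon j\in J\rangle.
\end{align*}
For a lower element $y=\langle \bar f_j\colon j\in J\rangle$, the division formula for two lower elements yields $y^-=\langle b_i\colon i\in I\rangle$ with $b_i=f_{\lambda^{-1}(i)}$ when $\lambda^{-1}(i)$ is defined and $b_i=1$ otherwise, and $y^\sim=\langle a_i\colon i\in I\rangle$ with $a_i=f_{\rho^{-1}(i)}$ when $\rho^{-1}(i)$ is defined and $a_i=1$ otherwise.

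Next I would iterate, using the identities $\rho^{-1}(\rho(j))=j$ and $\lambda^{-1}(\lambda(j))=j$, valid since $\lambda,\rho$ are injective. For an upper element the double negations land back in the upper part and become
\begin{align*}
x^{-\sim} &= \langle c_i\colon i\in I\rangle, \quad c_i = a_i \text{ if } i\in\rho(J),\ c_i=1 \text{ otherwise},\\
x^{\sim-} &= \langle d_i\colon i\in I\rangle, \quad d_i = a_i \text{ if } i\in\lambda(J),\ d_i=1 \text{ otherwise}.
\end{align*}
For a lower element the same bookkeeping collapses everything, giving $y^{-\sim}=y=y^{\sim-}$ for every $y$, independently of $\lambda,\rho$. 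Hence the lower part imposes no constraint whatsoever, and both equivalences are decided entirely on the upper part $A^I$.

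For part (1), goodness means $c_i=d_i$ for all $i$ and all choices of $\langle a_i\rangle$. If $i\in\rho(J)\setminus\lambda(J)$ then $c_i=a_i$ while $d_i=1$; choosing $a_i\ne 1$ (possible as $\mathbf A$ is non-trivial) forces $\rho(J)\subseteq\lambda(J)$, and symmetrically $\lambda(J)\subseteq\rho(J)$, i.e. $\lambda(J)=\rho(J)$. Conversely, $\lambda(J)=\rho(J)$ gives $c_i=d_i$ for every $i$, which together with the automatic lower-part equality yields goodness. For part (2), being a pseudo MV-algebra is by definition equivalent to $x^{-\sim}=x=x^{\sim-}$ for all $x$ (this already forces goodness); on the upper part this reads $c_i=a_i=d_i$ for all $i$ and all $\langle a_i\rangle$, which by the same non-triviality argument forces $\rho(J)=I$ and $\lambda(J)=I$, the lower part being automatic, and conversely $\lambda(J)=I=\rho(J)$ makes $c_i=d_i=a_i$ throughout. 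The routine part is the index bookkeeping with the partial inverses $\lambda^{-1},\rho^{-1}$; the only delicate point, and the sole place where the hypothesis that $\mathbf A$ is non-trivial is genuinely needed, is the passage from equality of the double negations as elements to equality of the image sets, which relies on the existence of some $a\ne 1$ in $A$.
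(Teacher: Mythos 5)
Your proof is correct and follows essentially the same route as the paper: compute the two double negations coordinatewise, observe that the lower part satisfies $y^{-\sim}=y=y^{\sim-}$ automatically so only the upper part $A^I$ imposes constraints, and use the non-triviality of $\mathbf A$ to pass from equality of elements to equality of the image sets $\lambda(J)$ and $\rho(J)$ (resp.\ to $\lambda(J)=I=\rho(J)$ for the pseudo MV case). Your coordinate formulas, with $x^{-\sim}$ governed by $\rho(J)$ and $x^{\sim-}$ by $\lambda(J)$, check out against the division tables; the paper's displayed formulas have the roles of $\lambda$ and $\rho$ interchanged, which is immaterial since the criteria are symmetric in $\lambda$ and $\rho$.
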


\begin{proof}
(1) Let $x = \langle a_i \colon i \in I\rangle$.

In addition, $ x^{-\sim}=
\langle x_i\colon i\in I\rangle, $ where
$$
x_i=\begin{cases} a_i &
\text{ if } \lambda^{-1}(i) \text{ is defined}\\
1_i & \text{ otherwise},
\end{cases}
$$
and $x^{\sim-}= \langle y_i\colon i\in I\rangle$, where
$$
y_i=\begin{cases} a_i &
\text{ if } \rho^{-1}(i) \text{ is defined}\\
1_i & \text{ otherwise}.
\end{cases}
$$
Now, if $x = \langle \bar f_j\colon j\in J\rangle$, we have
$x^{-\sim} = \langle \bar g_j\colon j\in J\rangle$, where
$$
g_j =
\begin{cases} f_j & \text{ if } \rho^{-1}(i) \text{ is defined}\\
1 & \text{ otherwise},
\end{cases}
$$
and $ x^{\sim-} = \langle h_j\colon j\in J\rangle$, where
$$
h_j =
\begin{cases} f_j & \text{ if } \lambda^{-1}(i) \text{ is defined}\\
1 & \text{ otherwise}.
\end{cases}
$$
Hence, if $\lambda(J)=\rho(J),$ the kite pseudo BL-algebra
$K_{I,J}^{\lambda,\rho}(\mathbf{A})$ is good.

Conversely, assume that the kite pseudo BL-algebra
$K_{I,J}^{\lambda,\rho}(\mathbf{A})$ is good, and let $\lambda(J)\ne
\rho(J).$ Take $x = \langle a_i \colon i \in I\rangle$  where each $a_i \ne
1.$ There is an $i\in I$ such that either $\lambda^{-1}(i)$ or
$\rho^{-1}(i)$ is not defined. Equivalently, $x_i= 1$ and
$x_i=a_i$ or $y_i=1$ and $y_i=a_i.$
Hence, $\lambda(J)=\rho(J).$

(2) If $\lambda(J)=I=\rho(J)$, then
$K_{I,J}^{\lambda,\rho}(\mathbf{A})$ is a pseudo MV-algebra by Proposition
\ref{pr:3.2}. Conversely, let the kite pseudo BL-algebra $K_{I,J}^{\lambda,\rho}(\mathbf{A})$ be a pseudo MV-algebra.
Since every pseudo MV-algebra is good, by the first part of the
present proof, we have $\lambda(J)=\rho(J)$. Now assume that there is an $i\in I\setminus \lambda(J)$. Then both
$\lambda^{-1}(i)$ and $\rho^{-1}(i)$ are not defined, whence
$x_i=1 = y_i \ne a_i$ which contradicts the property
$x^{-\sim}=x=x^{\sim-}$. Therefore, $\lambda(J)=I=\rho(J)$.
\end{proof}

\section{Subdirectly Irreducible Kite Pseudo BL-algebras}

In this section we describe and classify subdirectly irreducible kite pseudo BL-algebras. We show that subdirectly irreducible algebras can be found only among those kites where both sets are at most countably infinite. Our results generalize analogous ones from \cite{DvKo} and, in many situations, the proofs are similar or inspired by original ones from \cite{DvKo}.

Let $K_{I,J}^{\lambda,\rho}(\mathbf{A})$ be a kite and $\alpha$ a cardinal. An element $\langle a_i\colon i \in I\rangle $ is said (i) $\alpha$-{\it dimensional} if $|\{i\in I\colon a_i \ne 1\}|=\alpha$, (ii) {\it finite-dimensional} if it is $\alpha$-dimensional for some finite cardinal $\alpha$. In the analogous way we say that $\langle \bar f_j\colon j \in J\rangle$ is $\alpha$-dimensional or finite-dimensional.

\begin{lemma}\label{le:4.1}
Let $K_{I,J}^{\lambda,\rho}(\mathbf{A})$ be a kite pseudo BL-algebra.

\noindent {\rm (1)}
$(A)^I$ is a maximal normal filter of $K_{I,J}^{\lambda,\rho}(\mathbf{A})$.

\noindent {\rm (2)} Let $F$ be a normal filter of $\mathbf A$. Then $F^I=\{\langle a_i\colon i \in I\rangle\colon$ for all  $i, a_i\in F\}$ and
$F^I_f,$ the system of finite-dimensional elements of $F^I,$ are normal
filters of $K_{I,J}^{\lambda,\rho}(\mathbf{A})$.
\end{lemma}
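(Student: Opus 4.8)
The plan is to verify the two defining properties of a normal filter---closure under multiplication and closure upward, together with the normality condition $b\rd a \in F$ iff $a\ld b \in F$---for each of the three candidate sets, $(A)^I$, $F^I$, and $F^I_f$. Throughout I would use the formulas for the operations given in Section~3 and the fact, proved in Theorem~\ref{th:3.1}, that the whole structure is a pseudo BL-algebra; in particular filters need only be checked against the operations in the upper part $A^I$, since any product of two elements one of which lies in the lower part $(\overline A)^J$ lands in $(\overline A)^J$ and hence cannot meet a filter contained in $A^I$.

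First I would treat $(A)^I$. Closure under $\cdot$ is immediate because the product of two upper elements is computed coordinatewise and stays in $A^I$. For the upward closure I would note that every element of the kite that lies \emph{above} some element of $A^I$ must itself lie in $A^I$, since all lower elements sit below all upper ones; combined with the fact that $A^I$ is closed under the partial order on its own coordinates this gives the filter property. Normality I would get from the coordinatewise definitions of $\ld$ and $\rd$ on $A^I$: for $x,y \in A^I$, $y\rd x$ and $x\ld y$ are both computed coordinate by coordinate and each lies in $A^I$ regardless, so the biconditional $y\rd x \in (A)^I$ iff $x\ld y \in (A)^I$ holds trivially (both sides are always true). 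Maximality then follows because $(A)^I$ together with any lower element generates, via multiplication by lower elements, the bottom $0$, forcing the filter to be all of the kite; so no proper filter properly contains $(A)^I$.

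Next I would handle $F^I$ for a normal filter $F$ of $\mathbf A$. Closure under $\cdot$ and upward closure reduce to the corresponding coordinatewise statements for $F$: if $a_i,b_i \in F$ for all $i$ then $a_ib_i \in F$, and if $\langle a_i\rangle \le \langle c_i\rangle$ in $A^I$ with each $a_i \in F$ then $a_i \le c_i$ forces $c_i \in F$. Upward closure against lower elements is vacuous as before. For normality I would use the coordinatewise formulas $y\rd x = \langle b_i \rd a_i\rangle$ and $x\ld y = \langle a_i \ld b_i\rangle$ and apply the normality of $F$ in each coordinate: $b_i\rd a_i \in F$ iff $a_i\ld b_i \in F$. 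The passage to $F^I_f$ is identical except that I must also check that the finite-dimensionality condition is preserved: the set of coordinates where a product $a_ib_i$ differs from $1$ is contained in the union of the two supports, hence finite, and similarly an element above a finite-dimensional one has support contained in the smaller support, so it too is finite-dimensional; the residuation formulas likewise cannot enlarge the support beyond a finite set.

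The main obstacle I anticipate is not any single computation but the bookkeeping of the normality condition: one must be careful that the biconditional is verified in the correct direction and that the coordinatewise reduction genuinely uses the hypothesis that $F$ (respectively $\{1\}$, for the $(A)^I$ case) is \emph{normal} in $\mathbf A$, rather than merely a filter. I would therefore state explicitly that normality of $F^I$ and $F^I_f$ is inherited coordinatewise from normality of $F$, and for $(A)^I$ that normality is automatic. A secondary point worth addressing carefully is the maximality claim in part~(1): I would argue that any filter strictly larger than $(A)^I$ must contain a lower element $\langle \bar f_j\rangle$, and then the product $\langle \bar f_j\rangle \cdot \langle \bar g_j\rangle = 0$ shows $0$ lies in the filter, so by upward closure the filter equals the entire kite, establishing that $(A)^I$ is indeed maximal.
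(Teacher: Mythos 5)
Your overall strategy differs from the paper's in one respect worth noting: the paper verifies normality of $A^I$, $F^I$ and $F^I_f$ via closure under the conjugates $y\ld xy$ and $xy\rd y$ (and for $F^I$ this requires the inequality $a\le c\rd(c\rd a)$ from \cite[Lem 2.5(16)]{GLP}), whereas you work directly from the paper's definition of normality, namely the biconditional $y\rd x\in F \Leftrightarrow x\ld y\in F$ for all $x,y$ in the kite. That is a legitimate alternative, and your treatment of the filter axioms, of finite-dimensionality, and of maximality (a filter containing a lower element $\langle\bar f_j\rangle$ contains $\langle\bar f_j\rangle^2=0$, hence everything) is sound and essentially matches the paper's.

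However, there is a genuine gap in your normality argument for part (2). You check the biconditional only for pairs $x,y\in A^I$, and you dismiss all interaction with the lower part as ``vacuous.'' That dismissal is correct for \emph{products} (a product involving a lower element stays in $(\overline A)^J$) but not for \emph{residuals}: by the defining formulas, when $x=\langle\bar f_j\rangle$ and $y=\langle\bar g_j\rangle$ are both lower elements, $x\ld y$ and $y\rd x$ land in $A^I$, namely $x\ld y=\langle a_i\rangle$ with $a_i=f_{\rho^{-1}(i)}\rd g_{\rho^{-1}(i)}$ and $y\rd x=\langle b_i\rangle$ with $b_i=g_{\lambda^{-1}(i)}\ld f_{\lambda^{-1}(i)}$ (and $1$ where the preimages are undefined). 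So these residuals can perfectly well belong, or fail to belong, to $F^I$, and the biconditional $\langle b_i\rangle\in F^I \Leftrightarrow \langle a_i\rangle\in F^I$ is exactly the nontrivial content of normality here: it reduces to ``$g_j\ld f_j\in F$ for all $j$ iff $f_j\rd g_j\in F$ for all $j$,'' which needs the normality of $F$ in $\mathbf A$ applied through the reindexing by $\lambda^{-1}$ and $\rho^{-1}$ (and, for $F^I_f$, the injectivity of $\lambda,\rho$ together with the observation that $f_j\rd g_j=1$ iff $g_j\le f_j$ iff $g_j\ld f_j=1$, so the two residuals have supports of the same finite size). The same omission affects your treatment of $(A)^I$ in part (1), where the mixed case $x\in A^I$, $y\in(\overline A)^J$ gives residuals lying in $(\overline A)^J$ (both sides of the biconditional false) rather than ``both in $A^I$ regardless'' as you claim; there the conclusion still holds in every case, but the case analysis must actually be carried out. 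As written, your proposal proves that $F^I$ is a filter but not that it is normal; this is precisely the case to which the paper devotes its computation.
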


\begin{proof}
(1) Since the multiplication and ordering are in $A^I$ defined by coordinates, it is clear that $A^I$ is a filter. To show that $A^I$ is normal, we establish that it is closed under conjugations $y\ld xy$ by elements from $(\overline A)^J$. Thus if $x = \langle a_i\colon i \in I\rangle$ and $y = \langle \bar f_j\colon j \in J\rangle$, then $xy \in (\overline A)^J$ and $y\ld xy \in A^I.$ The same is true for right conjugates. To show that $A^I$ is maximal, take $y=\langle \bar f_j\colon j \in J\rangle$. Then the filter of the kite generated by $A^I$ and the element $y$, $F(A^I,y)$, is the set $\{a\in K_{I,J}^{\lambda,\rho}(\mathbf{A})\colon hy^n\le a$ for some integer $n \ge 0$ and $h \in A^I\},$ where $y^1 =1$ and $y^{n+1}=y^ny.$ We see that $F(A^I,y)= K_{I,J}^{\lambda,\rho}(\mathbf{A}).$

(2) Now it is clear that $F^I$ and $F^I_f$ are filters. To prove the normality, we use again left and right conjugates. Take an element
$\langle \bar f_j\colon j\in J\rangle$ and consider
$x=\langle \bar f_j\colon j\in J\rangle\ld
\langle a_i\colon i\in I\rangle\cdot\langle f_j\colon j\in J\rangle$ for
some $\langle a_i\colon i\in I\rangle\in F^I$. This is equal to $\langle b_i\colon i\in I\rangle$,
where
$$
b_i =\begin{cases}
f_{\rho^{-1}(i)}\rd (f_{\rho^{-1}(i)} \rd a_{\lambda(\rho^{-1}(i))}) &
\text{ if } \rho^{-1}(i) \text{ is defined}\\
e^{-1} & \text{ otherwise}.
\end{cases}\\
$$
By \cite[Lem 2.5(16)]{GLP}, we have $a_{\lambda(\rho^{-1}(i))} \le b_i$ so that $b_i\in F$ and consequently, $x \in F^I$. In the same way we can prove that $F^I$ is closed also under the right conjugates $\langle \bar f_j\colon j\in J\rangle\cdot
\langle a_i\colon i\in I\rangle\rd \langle f_j\colon j\in J\rangle$ which proves that $F^I$ is normal. In the analogous way we prove that also $F^I_f$ is a normal filter.
\end{proof}

The following statement shows that the subdirectly irreducible kites can appear only from subdirectly irreducible basic pseudo hoops.

\begin{lemma}\label{le:4.2}
If the kite pseudo BL-algebra $K_{I,J}^{\lambda,\rho}(\mathbf{A})$ is subdirectly irreducible, then $\mathbf A$ is subdirectly irreducible.

\end{lemma}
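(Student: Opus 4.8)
The plan is to prove the contrapositive, using the correspondence between normal filters and congruences recalled just before Section 3, together with the description of the normal filters $F^I$ and $A^I$ established in Lemma~\ref{le:4.1}. Recall that a pseudo hoop (or kite) is subdirectly irreducible iff it is nontrivial and possesses a \emph{least} nontrivial normal filter. So I would suppose that $\mathbf{A}$ is \emph{not} subdirectly irreducible and show the kite is not subdirectly irreducible either, by producing two distinct nontrivial normal filters of the kite whose intersection is trivial (equivalently, showing there is no least nontrivial normal filter).

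First I would dispose of the degenerate situation: if $\mathbf{A}$ is trivial, then $A=\{1\}$ and the kite is a two-element Boolean algebra (as noted after Theorem~\ref{th:3.1}), which is simple, hence subdirectly irreducible — so in the contrapositive that case does not arise and I may assume $\mathbf{A}$ is nontrivial but lacks a least nontrivial normal filter. Then there exist two nontrivial normal filters $F_1, F_2$ of $\mathbf{A}$ with $F_1 \cap F_2 = \{1\}$ (if every pair of nontrivial normal filters met nontrivially one could still fail to have a least one, so more carefully I would use that the failure of subdirect irreducibility means the intersection of \emph{all} nontrivial normal filters is $\{1\}$, and pick finitely many, or two, witnessing filters $F_1,F_2$ with trivial intersection). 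The key transport step is Lemma~\ref{le:4.1}(2): each normal filter $F$ of $\mathbf{A}$ lifts to a normal filter $F^I$ of the kite. Since $F_1,F_2 \ne \{1\}$, the lifted filters $F_1^I, F_2^I$ are nontrivial normal filters of the kite, and $F_1^I \cap F_2^I = (F_1\cap F_2)^I = \{1\}^I = \{1\}$ because membership in $F^I$ is coordinatewise. Two nontrivial normal filters meeting trivially preclude the existence of a least nontrivial normal filter, so the kite is not subdirectly irreducible, as required.

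The main obstacle I anticipate is the correct formulation of ``$\mathbf{A}$ is not subdirectly irreducible'': it is not literally ``there are two normal filters with trivial intersection,'' but rather ``$\mathbf{A}$ is nontrivial and has no least nontrivial normal filter,'' which for the contrapositive I must convert into a concrete choice of witnessing filters. I would handle this by observing that if no least nontrivial normal filter exists, then for any single nontrivial normal filter $F$ one can find another nontrivial normal filter $F'$ with $F \not\subseteq F'$; intersecting down (or simply taking the intersection of all nontrivial normal filters, which is then $\{1\}$) furnishes the trivial-intersection pair after lifting. A secondary technical point is verifying $F_1^I \cap F_2^I = (F_1 \cap F_2)^I$, which is immediate from the coordinatewise definition $F^I = \{\langle a_i \colon i \in I\rangle : a_i \in F \text{ for all } i\}$, and checking that the lift of a normal filter is normal — but this is exactly the content of Lemma~\ref{le:4.1}(2), so I may invoke it directly rather than redo the conjugation computations.
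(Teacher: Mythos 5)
Your argument is the same in substance as the paper's: use Lemma~\ref{le:4.1}(2) to lift normal filters $F$ of $\mathbf A$ to normal filters $F^I$ of the kite, and note that membership in $F^I$ is coordinatewise, so intersections of lifts are lifts of intersections. The one genuine flaw is your primary framing, namely that failure of subdirect irreducibility of $\mathbf A$ can be witnessed by \emph{two} nontrivial normal filters $F_1,F_2$ with $F_1\cap F_2=\{1\}$, obtained by ``intersecting down.'' This reduction is not available in general: the nontrivial normal filters may form a chain with trivial intersection. For instance, in the basic hoop $(\mathbb{Z}^-)^{\omega}$ with coordinatewise operations, the filters $F_n$ of elements whose first $n$ coordinates equal the unit are normal and satisfy $\bigcap_n F_n=\{1\}$, yet any two of them are comparable, so every finite subfamily has nontrivial intersection; no witnessing pair exists. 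The correct route is the one you relegate to a parenthesis: take the \emph{entire} (possibly infinite) family $\{F_t\colon t\in T\}$ of nontrivial normal filters, whose intersection is $\{1\}$ precisely because there is no least one, lift it to $\{F_t^I\colon t\in T\}$, and conclude $\bigcap_{t\in T}F_t^I=\{1\}$ coordinatewise. That already shows the kite has no least nontrivial normal filter; no extraction of a pair is needed or possible. This is exactly the proof in the paper.

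A minor remark on your treatment of the degenerate case: your dismissal of trivial $\mathbf A$ tacitly uses the paper's convention (end of Section~2) under which a trivial pseudo hoop counts as subdirectly irreducible; with the opposite convention the lemma itself would fail there, since the kite is then the two-element Boolean algebra. The paper sidesteps this in the same way, calling the case ``evident,'' so your handling is consistent with the source.
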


\begin{proof}
If $A=\{1\}$, the statement is evident. Now, let $\mathbf A$ be non-trivial. Suppose the converse, that is, $\mathbf{A}$ is not
subdirectly irreducible. We can find a family
$\{F_t\colon t\in t\}$ of non-trivial normal
filters of $\mathbf{A}$, such that $\bigcap_{t\in T}F_t = \{1\}$.
By Lemma~\ref{le:4.1}, $F_t^I$ is a normal filter of
$K_{I,J}^{\lambda,\rho}(\mathbf{A})$ for all $t\in T$. Suppose
$\langle a_i \colon i\in I\rangle$ belongs to $F_t^I$ for each
$t\in T$. Then, for any coordinate $k\in I$, we have that $a_k\in
F_t$ for all $t\in T$, which gives $a_k = 1$. Therefore,
$\bigcap_{t\in T}F_t^I = \{1\}$, showing that the set
$\{F_t^I\colon t\in T\}$ of non-trivial normal filters of
$K_{I,J}^{\lambda,\rho}(\mathbf{A})$ has the trivial intersection. Consequently,
$K_{I,J}^{\lambda,\rho}(\mathbf{A})$ is not subdirectly irreducible,
proving the lemma.
\end{proof}

For the next result we need the following notion originally introduced in \cite{DvKo}. We say that elements
$i,j\in I$ are {\it connected} if there is an integer $m \ge 0$ such that $(\rho\circ\lambda^{-1})^m(i)= j$ or $(\lambda\circ \rho^{-1})^m(i)= j$; otherwise, $i$ and $j$ are said to be {\it disconnected}.

The relation $i$ and $j$ are connected is an equivalence on $I$. We call this equivalence class a {\it connected component} of $I.$ We denote by $\mathcal C(I)$ the set of all connected components of $I.$

It is noteworthy to recall that if $C$ is a connected component of $I,$ then $\lambda^{-1}(C)= \rho^{-1}(C).$  Indeed, let $i \in C$ and $k=\lambda^{-1}(i).$ Then $j=\rho (k)= \rho \circ \lambda^{-1}(i) \in C.$ Hence, $k=\rho^{-1}(j)$ which proves $\lambda^{-1}(C)\subseteq \rho^{-1}(C).$ In the same way we prove the opposite inclusion. In particular, we have that $\lambda^{-1}(i)$ and $\rho^{-1}$ are defined for all $i \in I.$  In particular, we have $\lambda(\rho^{-1}(C))= \rho(\lambda^{-1}(C)) = C=\lambda(\lambda^{-1}(C))= \rho(\rho^{-1}(C)).$

The proof of the following theorem follows the basic steps from the proof of \cite[Thm 5.5]{DvKo}.

\begin{theorem}\label{th:4.3}
Let $\mathbf{A}$ be a non-trivial basic pseudo hoop and let
$K_{I,J}^{\lambda,\rho}(\mathbf{A})$ be a kite.
The following are equivalent:
\begin{enumerate}
\item $\mathbf{A}$ is subdirectly irreducible and for all $i,j\in I$ there exists an integer
  $m\ge 0$ such that
$(\rho\circ\lambda^{-1})^m(i) = j$ or
$(\lambda\circ\rho^{-1})^m(i) = j$.
\item $K_{I,J}^{\lambda,\rho}(\mathbf{A})$ is a subdirectly irreducible pseudo BL-algebra.
\end{enumerate}
\end{theorem}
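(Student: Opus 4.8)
The plan is to lean on the criterion recalled at the end of Section~2: a non-trivial pseudo hoop is subdirectly irreducible exactly when it has a least non-trivial normal filter. I assume throughout that $I\neq\emptyset$ (the degenerate case $I=\emptyset$ forces $J=\emptyset$ and makes the kite the two-element Boolean algebra, which is handled separately). For the implication (2)$\Rightarrow$(1), that $\mathbf A$ is subdirectly irreducible is precisely Lemma~\ref{le:4.2}, so only the connectivity clause remains, and I would argue it by contraposition. If $I$ splits into at least two connected components, fix one component $C\subsetneq I$ and set $N_1=\{\langle a_i\colon i\in I\rangle\in A^I\colon a_i=1 \text{ for }i\notin C\}$ and $N_2=\{\langle a_i\colon i\in I\rangle\in A^I\colon a_i=1 \text{ for }i\in C\}$. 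Each is a normal filter of the kite: upward closure and closure under products are immediate because $1$ is the top of $A$, and normality follows from the conjugate computation of Lemma~\ref{le:4.1}(2), since the coordinate maps $\rho\circ\lambda^{-1}$ and $\lambda\circ\rho^{-1}$ appearing there carry each connected component into itself. As $\mathbf A$ is non-trivial and both $C$ and $I\setminus C$ are non-empty, $N_1,N_2$ are non-trivial while $N_1\cap N_2=\{1\}$, so the kite is not subdirectly irreducible, a contradiction.

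For the harder implication (1)$\Rightarrow$(2), let $F_0$ be the least non-trivial normal filter of $\mathbf A$ and put $N:=(F_0)^I_f$, a non-trivial normal filter by Lemma~\ref{le:4.1}(2). I would prove that $N$ is contained in every non-trivial normal filter $M$, which makes it the least one. Write $\delta_i(c)$ for the single-coordinate element of $A^I$ equal to $c$ at $i$ and to $1$ elsewhere; since every finite-support element of $(F_0)^I$ is a finite product of such elements and $N$ is upward closed ($1$ being the top of $A$), it suffices to show $\delta_i(c)\in M$ for all $i\in I$ and $c\in F_0$. If $M$ meets the lower part $(\overline A)^J$, then every element of $A^I$ lies above the chosen lower element, so $A^I\subseteq M$ and hence $N\subseteq M$. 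Otherwise choose $m=\langle a_i\colon i\in I\rangle\in M\cap A^I$ with $a_{i_0}\neq 1$ for some $i_0$.

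The engine then consists of three ingredients, each using that $M$ is a normal filter. First, a cleanup step: since $M$ is upward closed and $1$ is the top, from any $w\in M\cap A^I$ with $w_{i^*}\neq 1$ one gets $\delta_{i^*}(w_{i^*})\in M$, because $\delta_{i^*}(w_{i^*})\ge w$. Second, a spreading step: conjugating $\delta_i(d)$ by a lower-part element whose relevant coordinate equals $d$ yields, via the formula of Lemma~\ref{le:4.1}(2) together with $d\rd d=1$ and $x\rd 1=x$, exactly $\delta_{\rho\circ\lambda^{-1}(i)}(d)$, and symmetrically $\delta_{\lambda\circ\rho^{-1}(i)}(d)$ from the right conjugate; thus the set of coordinates carrying a single-coordinate element of $M$ is closed under $\rho\circ\lambda^{-1}$ and $\lambda\circ\rho^{-1}$, which by the connectivity hypothesis in (1) is all of $I$, giving $\delta_{i^*}(a_{i_0})\in M$ for every $i^*$. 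Third, a filling step: conjugating $\delta_{i^*}(d^*)$ with $d^*\neq 1$ by upper-part elements acts coordinatewise and keeps the support at $i^*$, so products of such conjugates realize at $i^*$ the whole normal filter of $\mathbf A$ generated by $d^*$, which contains $F_0$ by subdirect irreducibility of $\mathbf A$; one more use of upward closure then gives $\delta_{i^*}(c)\in M$ for every $c\in F_0$. Together these yield $\delta_{i^*}(c)\in M$ for all $i^*\in I$ and $c\in F_0$, hence $N\subseteq M$.

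I expect the main obstacle to be the spreading step, namely checking that the conjugation formulas of Lemma~\ref{le:4.1}(2) transport a single-coordinate element to a single-coordinate element with the \emph{same} value and with no residual contribution on the other coordinates. This is exactly where the precise choice of the conjugating lower-part element, the identities $d\rd d=1$ and $x\rd 1=x$, and the fact that the coordinate maps respect connected components must be combined carefully. Once these clean single-coordinate moves are secured, the interplay of connectivity (reaching every coordinate) and subdirect irreducibility of $\mathbf A$ (filling out $F_0$ at each coordinate), glued together by the essentially free upward-closure cleanup, completes the argument and identifies $(F_0)^I_f$ as the least non-trivial normal filter.
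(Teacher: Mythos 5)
Your proposal is correct and follows essentially the same route as the paper: (2)$\Rightarrow$(1) via Lemma~\ref{le:4.2} together with two normal filters supported on distinct connected components having trivial intersection, and (1)$\Rightarrow$(2) by identifying $(F_0)^I_f$ as the least non-trivial normal filter through single-coordinate elements, transport along $\rho\circ\lambda^{-1}$ and $\lambda\circ\rho^{-1}$, and coordinatewise generation. The only cosmetic differences are that the paper performs the transport with iterated double negations $u^{\sim\sim^m}$ and $u^{--^m}$ rather than with general conjugates by lower-part elements, and that you spell out more explicitly why an arbitrary non-trivial normal filter $M$ (not only one already meeting $N^I_f$) must contain $(F_0)^I_f$.
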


\begin{proof}
(1) $\Rightarrow$ (2) Let $N$ be the smallest
non-trivial  normal filter of $\mathbf{A}$. By
Lemma~\ref{le:4.1}, we have that $N^I_f,$ the system of finite-dimensional
elements of $N^I,$ is a normal filter of
$K_{I,J}^{\lambda,\rho}(\mathbf{A}).$
It is necessary to show that
$N^I_f$ is the smallest non-trivial filter of the kite. Since for any $a\in
N^I_f\setminus\{1\}$ there is a one-dimensional element $a'$ with
$a\leq a'<1$, it suffices to prove that any one-dimensional element
$b\in N^I_f\setminus\{1\}$ generates $N^I_f$. Without loss of generality
assume $b = \langle b_0, 1,\dots\rangle$; this is always
achievable by a suitable re-ordering of $I$, regardless of its
cardinality. Observe that $b_0$ generates $N$, since $N$ is the
smallest non-trivial normal filter of $\mathbf{A}$. It follows that
$b$ generates all members of $N^I_f$ of the form $\langle a,
1,\dots\rangle$, using only conjugates of the same form.
Consider an arbitrary $i\in I$. By assumption, there is an integer $m\ge 0$
with $(\rho\circ\lambda^{-1})^m(0) = i$ or $(\lambda\circ\rho^{-1})^m(0) =
i$.  We denote be $a^{\sim\sim^m}$ and $a^{--^m}$ the $m$-times performing double negations of the same kind. An easy calculation shows that for an element $u = \langle
a, 1,\dots\rangle$, one of the following must be the case:
\begin{itemize}
\item if $(\rho\circ\lambda^{-1})^m(0) = i$, then
$u^{\sim\sim^m}= \langle
1,\dots,1,a,1,\dots\rangle$,
\item if $(\lambda\circ\rho^{-1})^m(0) = i$, then
$u^{--^m} = \langle
1,\dots,1,a,1,\dots\rangle.$
\end{itemize}
Re-numbering $I$ if necessary, we may assume that $a$ occurs in the $m$-the co-ordinate.
By taking appropriate finite meets, it follows that every element of $N^I_f$
can be generated, which proves the implication.

(2) $\Rightarrow$ (1)
By Lemma~\ref{le:4.2}, we can assume
$\mathbf{A}$ is subdirectly irreducible. Then, suppose there are
$i,j\in I$ such that for all $m\in \mathbb N$ we have
$(\rho\circ\lambda^{-1})^m(i) \neq j$ and $(\lambda\circ\rho^{-1})^m(i) \neq
j$. Now, let $I_0$ and $I_1$ be
connected components of $I$ such that $i\in I_0$ and $j\in I_1$.
Clearly, $I_0$ and $I_1$ are disconnected, that is, no member of
$I_0$ is connected to any member of $I_1$. We will prove that
$N^{I_0}\cap N^{I_1} = \{1\}$, from which it follows immediately
that $K_{I,J}^{\lambda,\rho}(\mathbf{A})$ is not subdirectly
irreducible. In fact, it suffices to show that for an element $u =
\langle u_i\colon i\in I\rangle$ such that $u_i =1$
for all $i\notin I_0$, and for any element $b$, the conjugate $b\ld
ub$ has $(b\ld ub)(i) = 1$ if $i\notin I_0$, and the same holds
for $bu\rd b$. Take $b = \langle \bar b_j\colon j\in J\rangle$. We have
\begin{align*}
b\ld ub &=
\langle \bar b_j\colon j\in J\rangle\ld
\langle u_i\colon i\in I\rangle\cdot
\langle \bar b_j\colon j\in J\rangle\\
&= \langle \bar b_j\colon j\in J\rangle\ld
\langle \overline {b_j\rd u_{\lambda(j)}}\colon j\in J\rangle\\
&= \langle c_i\colon i\in I\rangle,
\end{align*}
where  $c_i =\begin{cases}
b_{\rho^{-1}(i)} \rd
(b_{\rho^{-1}(i)} \rd u_{\lambda(\rho^{-1}(i))})&
\text{ if } \rho^{-1}(i) \text{ is defined}\\
1 & \text{ otherwise.}
\end{cases}$\\
Now, by assumption $u_i = 1$ for $i\notin I_0$, and
by connectedness, $\lambda(\rho^{-1}(i))\notin I_0$ if $i\notin I_0$.
Therefore, $c_i$ can be different from $1$ only if
$i\in I_0$, and thus $b\ld ub$ is of the required form. The claim for the other conjugate follows by symmetry.
\end{proof}

The proof of the following lemma is identical with the proof of \cite[Lem 5.6]{DvKo}, therefore, we omit it here.

\begin{lemma}\label{le:4.4}
If $\mathbf{A}$ is a non-trivial basic pseudo hoop,
$K_{I,J}^{\lambda,\rho}(\mathbf{A})$ a subdirectly irreducible kite,
and $I$ and $J$ are finite, then $K_{I,J}^{\lambda,\rho}(\mathbf{A})$
is isomorphic to one of:
\begin{enumerate}
\item $K_{0,0}^{\emptyset,\emptyset}(\mathbf{A})$,
$K_{1,1}^{id,id}(\mathbf{A})$, $K_{1,0}^{\emptyset,\emptyset}(\mathbf{A})$,
\item $K_{n,n}^{\lambda,\rho}(\mathbf{A})$, for $n>1$, with $\lambda(j) = j$ and
$\rho(j) = j+1\ (\mathrm{mod}\ n)$,
\item $K_{n+1,n}^{\lambda,\rho}(\mathbf{A})$, for $n>1$, with $\lambda(j) = j$ and
$\rho(j) = j+1$.
\end{enumerate}
\end{lemma}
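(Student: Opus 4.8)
The plan is to translate the combinatorial data $(I,J,\lambda,\rho)$ into a directed graph, read off the finite connected possibilities, and then realize each normal form as an isomorphic copy. First, since $K_{I,J}^{\lambda,\rho}(\mathbf A)$ is subdirectly irreducible and $\mathbf A$ is non-trivial, Theorem~\ref{th:4.3} in the direction (2) $\Rightarrow$ (1) tells me that $\mathbf A$ is subdirectly irreducible and that for all $i,j\in I$ there is an integer $m\ge 0$ with $(\rho\circ\lambda^{-1})^m(i)=j$ or $(\lambda\circ\rho^{-1})^m(i)=j$; equivalently, $I$ is a single connected component in the sense introduced before Theorem~\ref{th:4.3}.

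Next I would encode the data as a directed graph $G$ on vertex set $I$, with one edge $\lambda(j)\to\rho(j)$ for each $j\in J$. Because $\lambda$ is injective no two edges share a source, and because $\rho$ is injective no two edges share a target; hence every vertex of $G$ has out-degree at most $1$ and in-degree at most $1$. A standard argument then shows that such a $G$ is a disjoint union of simple directed paths and simple directed cycles, and that the partial maps $\rho\circ\lambda^{-1}$ and $\lambda\circ\rho^{-1}$ are precisely ``follow an edge forward'' and ``follow an edge backward.'' The connectivity recorded in the first step says exactly that the underlying undirected graph of $G$ is connected, so $G$ is either a single directed cycle or a single directed path.

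Since $I$ and $J$ are finite, I would enumerate the two possibilities. If $G$ is a cycle with $n$ vertices, then $|I|=|J|=n$ and every vertex is both a source and a target, so $\lambda(J)=I=\rho(J)$; re-indexing $I$ and $J$ around the cycle puts the data in the form $\lambda(j)=j$, $\rho(j)=j+1\ (\mathrm{mod}\ n)$, which for $n=1$ is $K_{1,1}^{id,id}(\mathbf A)$ (case (1)) and for $n>1$ is case (2). If $G$ is a path with $n$ edges, then $|I|=n+1$, $|J|=n$, and re-indexing along the path yields $\lambda(j)=j$, $\rho(j)=j+1$, giving the shape of case (3); the degenerate single-vertex path ($n=0$) is $K_{1,0}^{\emptyset,\emptyset}(\mathbf A)$ and the empty graph is $K_{0,0}^{\emptyset,\emptyset}(\mathbf A)$, both in case (1).

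The final and most delicate step is to justify that these re-indexings are genuine isomorphisms of pseudo BL-algebras, not merely of the index data. For this I would show that any pair of bijections $\sigma\colon I\to I'$ and $\tau\colon J\to J'$ satisfying $\lambda'\circ\tau=\sigma\circ\lambda$ and $\rho'\circ\tau=\sigma\circ\rho$ induces a coordinate-permuting map $K_{I,J}^{\lambda,\rho}(\mathbf A)\to K_{I',J'}^{\lambda',\rho'}(\mathbf A)$, sending $\langle a_i\colon i\in I\rangle$ to $\langle a_{\sigma^{-1}(i)}\colon i\in I'\rangle$ on the upper part and $\langle \bar f_j\colon j\in J\rangle$ to $\langle \bar f_{\tau^{-1}(j)}\colon j\in J'\rangle$ on the lower part. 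The intertwining relations with $\lambda$ and $\rho$ are exactly what is needed to verify that this map preserves $\cdot$, $\ld$, $\rd$, $\wedge$, $\vee$, $0$, and $1$ directly from the defining formulas of the kite. I expect this verification to be the main obstacle, since each operation has a mixed upper/lower case whose preservation hinges on these relations; the cycle and path normalizations supply the required compatible bijections by construction, which completes the classification.
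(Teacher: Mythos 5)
The paper does not actually prove this lemma; it states that the proof is identical to that of \cite[Lem 5.6]{DvKo} and omits it. Your argument is therefore a self-contained substitute, and it is essentially sound: the reduction to Theorem~\ref{th:4.3}(2)$\Rightarrow$(1), the digraph on $I$ with edges $\lambda(j)\to\rho(j)$ having in- and out-degree at most $1$ (hence a disjoint union of directed paths and cycles, a single one by connectedness), and the observation that a pair of bijections $\sigma,\tau$ intertwining $(\lambda,\rho)$ with $(\lambda',\rho')$ induces a coordinate-permuting isomorphism of kites, are all correct and together yield the classification; the last step is indeed routine once the relations $\lambda'\circ\tau=\sigma\circ\lambda$ and $\rho'\circ\tau=\sigma\circ\rho$ are written down, since every mixed clause in the definition of $\cdot$, $\ld$, $\rd$ transforms exactly by these identities. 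The one loose end is the single-edge path: your enumeration produces $K_{2,1}^{\lambda,\rho}(\mathbf A)$ with $\lambda(0)=0$, $\rho(0)=1$ (which is subdirectly irreducible by Theorem~\ref{th:4.3} when $\mathbf A$ is), yet it is not literally on the stated list, because case (3) is restricted to $n>1$ and case (1) only covers the zero-edge paths. You assert that a path with $n$ edges ``gives the shape of case (3)'' without noticing that $n=1$ falls outside the statement as written; you should either point out that this is an off-by-one in the lemma (case (3) should read $n\ge 1$, consistent with item (5) of Theorem~\ref{classif}, which carries no such restriction) or explain why that case cannot occur --- and it can, so the former is the correct resolution. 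Apart from flagging that discrepancy, your proof is complete.
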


In what follows, we show that for $I$ and $J$ infinite, the subdirect irreducibility of the kite $K_{I,J}^{\lambda,\rho}(\mathbf{A})$ implies $I$ is at most countable.

\begin{lemma}\label{countable-dim}
Let  $K_{I,J}^{\lambda,\rho}(\mathbf{A})$ be a subdirectly
irreducible kite of a non-trivial basic pseudo hoop $\mathbf A$. Then $I$ and $J$ are at most countably infinite.
\end{lemma}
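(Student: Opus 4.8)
The plan is to reduce the statement to the combinatorial structure of the maps $\lambda$ and $\rho$ by invoking the characterization of subdirect irreducibility already obtained in Theorem~\ref{th:4.3}. Since $\mathbf A$ is non-trivial and $K_{I,J}^{\lambda,\rho}(\mathbf{A})$ is subdirectly irreducible, Theorem~\ref{th:4.3}(1) tells us that $\mathbf A$ is subdirectly irreducible and, crucially, that for every pair $i,j\in I$ there is an integer $m\ge 0$ with $(\rho\circ\lambda^{-1})^m(i)=j$ or $(\lambda\circ\rho^{-1})^m(i)=j$. In the terminology introduced before Theorem~\ref{th:4.3}, this says precisely that $I$ consists of a single connected component; equivalently, $\mathcal C(I)$ is a singleton.

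The key observation is that $\sigma:=\rho\circ\lambda^{-1}$ is a well-defined partial injection of $I$ into itself, with domain $\lambda(J)$ and range $\rho(J)$, and that $\lambda\circ\rho^{-1}$ is exactly its partial inverse: since $\lambda$ and $\rho$ are injective, $(\lambda\circ\rho^{-1})\circ(\rho\circ\lambda^{-1})$ acts as the identity wherever it is defined. Consequently the connected component of a fixed element $i_0\in I$ is nothing but the $\sigma$-orbit of $i_0$, that is, the set of those $\sigma^m(i_0)$ and $\sigma^{-m}(i_0)$ that happen to be defined for $m\ge 0$.

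I would then conclude as follows. Fix $i_0\in I$. Because $\sigma$ and $\sigma^{-1}$ are partial \emph{functions}, each of the iterates $(\rho\circ\lambda^{-1})^m(i_0)$ and $(\lambda\circ\rho^{-1})^m(i_0)$ takes at most one value for each $m\ge 0$. Hence, by the connectedness forced above,
\[
I \;=\; \{(\rho\circ\lambda^{-1})^m(i_0)\colon m\ge 0\}\;\cup\;\{(\lambda\circ\rho^{-1})^m(i_0)\colon m\ge 0\},
\]
and the right-hand side is a union of two families each indexed by $\mathbb N$, hence at most countable. Therefore $|I|\le\aleph_0$, and since $|J|\le|I|$ by the standing hypothesis on the index sets, $J$ is at most countable as well.

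I expect no serious obstacle here: the heart of the argument is simply the recognition that, once Theorem~\ref{th:4.3} forces $I$ to be a single connected component of a partial bijection, that component is a single orbit and is therefore countable. The only point requiring a little care is the verification that $\lambda\circ\rho^{-1}$ is genuinely the partial inverse of $\rho\circ\lambda^{-1}$, so that alternating the two maps really traces out one $\mathbb Z$-indexed orbit rather than a more intricate reachability set; this is immediate from the injectivity of $\lambda$ and $\rho$.
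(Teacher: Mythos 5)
Your proof is correct and follows essentially the same route as the paper: both arguments observe that Theorem~\ref{th:4.3} forces $I$ to coincide with the (at most countable) reachability set $\{(\rho\circ\lambda^{-1})^m(i_0)\colon m\ge 0\}\cup\{(\lambda\circ\rho^{-1})^m(i_0)\colon m\ge 0\}$ of a single point $i_0$. The only cosmetic difference is that you bound $|J|$ by the standing hypothesis $|J|\le|I|$, whereas the paper first notes that $I=\lambda(J)\cup\rho(J)$ (any point outside $\lambda(J)\cup\rho(J)$ would be disconnected from the rest), so that $I$ and $J$ are countable together.
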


\begin{proof}
First, observe that if $I\setminus \bigl(\lambda(J)\cup\rho(J)\bigr)$ is
nonempty, then any $j\in \lambda(J)\cup\rho(J)$  is
disconnected from any $i\in I\setminus \bigl(\lambda(J)\cup\rho(J)\bigr)$.
Therefore, $I = \lambda(J)\cup\rho(J)$. It follows that $I$ is
countable iff $J$ is. Suppose $I$ and $J$ are uncountable and pick an
$i\in I$. Consider the set
$P(i) = \{(\rho\circ\lambda^{-1})^n(i)\colon n\ge 0\}\cup
\{(\lambda\circ\rho^{-1})^n(i)\colon n\ge 0\}$. Clearly $P(i)$ is
at most countable; so there is a $j\in I\setminus P(i)$.
But $P(i)$ exhausts all finite paths of back-and-forth
alternating $\lambda$ and $\rho$ beginning from $i$. Then, $i$ and $j$ are
disconnected, contradicting Theorem~\ref{th:4.3}.
\end{proof}

Now we present two consequences; for their detailed proofs see \cite[Lem 5.8, Thm 5.9]{DvKo}.

\begin{lemma}\label{countable-dim-maps}
Let  $K_{I,J}^{\lambda,\rho}(\mathbf{A})$ be a subdirectly
irreducible kite of non-trivial $\mathbf A$ with countably infinite $I$ and $J$.
Then, one of the following three cases must obtain:
\begin{enumerate}
\item $\lambda$ and $\rho$ are bijections.
\item $\lambda$ is a bijection and $|I\setminus\rho(J)| = 1$.
\item $\rho$ is a bijection and $|I\setminus\lambda(J)| = 1$.
\end{enumerate}
\end{lemma}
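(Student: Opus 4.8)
The plan is to recast the whole statement as a classification of the orbits of a single partial bijection on $I$. First I would record, exactly as in the proof of Lemma~\ref{countable-dim}, that subdirect irreducibility forces $I=\lambda(J)\cup\rho(J)$: a point $i$ lying outside both images has neither $\lambda^{-1}(i)$ nor $\rho^{-1}(i)$ defined and is never of the form $\rho(\lambda^{-1}(k))$ or $\lambda(\rho^{-1}(k))$, so $\{i\}$ would be an isolated connected component, which Theorem~\ref{th:4.3} forbids once $|I|>1$. Abbreviating $L:=\lambda(J)$ and $R:=\rho(J)$, we thus have $L\cup R=I$.

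Next I would note that $f:=\rho\circ\lambda^{-1}$ is a bijection of $L$ onto $R$, being the composite of the bijections $\lambda^{-1}\colon L\to J$ and $\rho\colon J\to R$, with inverse $\lambda\circ\rho^{-1}\colon R\to L$. The connectedness relation of Theorem~\ref{th:4.3} is precisely the orbit equivalence of $f$: from $i\in L$ the single forward move is $f(i)$, and from $i\in R$ the single backward move is $f^{-1}(i)$. Since $f$ and $f^{-1}$ are injective, each point has at most one successor and at most one predecessor, so every orbit is an unbranched chain, namely a finite path, a cycle, a forward ray, a backward ray, or a bi-infinite line. A point admitting no predecessor lies in $L\setminus R=I\setminus\rho(J)$ (a source), and a point admitting no successor lies in $R\setminus L=I\setminus\lambda(J)$ (a sink); in an unbranched chain a source (resp.\ sink) can occur only at a left (resp.\ right) end, so each orbit carries at most one source and at most one sink.

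The argument then closes by this dictionary between chain ends and image defects. Subdirect irreducibility and Theorem~\ref{th:4.3} make $I$ a single orbit, and as $I$ is infinite this orbit is neither a finite path nor a cycle; hence it is one of the three infinite chains. The bi-infinite line has no source and no sink, so $L\setminus R=R\setminus L=\emptyset$, giving $L=R=I$ and case~(1). The forward ray has one source and no sink, so $R\setminus L=\emptyset$ and $|L\setminus R|=1$; then $R\subseteq L$ forces $L=I$, i.e.\ $\lambda$ is a bijection, while $|I\setminus\rho(J)|=1$, which is case~(2). Symmetrically the backward ray has no source and one sink, forcing $\rho$ to be a bijection with $|I\setminus\lambda(J)|=1$, which is case~(3).

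The main thing to get right is the bookkeeping of the previous paragraph: one must keep straight that ``no predecessor'' corresponds to membership in $I\setminus\rho(J)$ and ``no successor'' to membership in $I\setminus\lambda(J)$, and that excluding the finite orbit types is exactly what the hypothesis $|I|=|J|=\aleph_0$ buys us (it rules out a finite path or a cycle). Everything else is the elementary classification of orbits of an injective partial map, so the only genuine content is the faithful translation of the three infinite chain types into the three displayed cases.
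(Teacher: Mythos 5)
Your argument is correct. Note that the paper itself gives no proof of this lemma, deferring to \cite{DvKo}, so there is nothing in the text to compare against directly; but your reduction is exactly the right one: $I=\lambda(J)\cup\rho(J)$ (re-derived as in Lemma~\ref{countable-dim}), the partial bijection $f=\rho\circ\lambda^{-1}\colon\lambda(J)\to\rho(J)$ whose orbit equivalence coincides with the connectedness relation, Theorem~\ref{th:4.3} forcing a single orbit, and the classification of infinite unbranched orbits into line, forward ray, and backward ray, which translate verbatim into cases (1), (2), (3) via the dictionary ``source $=$ point of $I\setminus\rho(J)$, sink $=$ point of $I\setminus\lambda(J)$.'' The bookkeeping you flag as the delicate point is carried out correctly, and the exclusion of finite paths and cycles by $|I|=\aleph_0$ is the only place the cardinality hypothesis is used, as it should be.
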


We present a classification of subdirectly irreducible kite pseudo BL-algebras.

\begin{theorem}\label{classif}
Let $K_{I,J}^{\lambda,\rho}(\mathbf{A})$ be a subdirectly
irreducible kite of a non-trivial $\mathbf A$.  Then $K_{I,J}^{\lambda,\rho}(\mathbf{A})$ is isomorphic to
precisely one of:
\begin{enumerate}
\setcounter{enumi}{-1}
\item $K_{0,0}^{\emptyset,\emptyset}(\mathbf{A})$,
$K_{1,1}^{id,id}(\mathbf{A})$, $K_{1,0}^{\emptyset,\emptyset}(\mathbf{A})$,
\item $K_{n,n}^{\lambda,\rho}(\mathbf{A})$, with $\lambda(j) = j$ and
$\rho(j) = j+1\ (\mathrm{mod}\ n)$.
\item $K_{\mathbb{Z},\mathbb{Z}}^{\lambda,\rho}(\mathbf{A})$, with $\lambda(j) = j$ and
$\rho(j) = j+1$.
\item $K_{\omega,\omega}^{\lambda,\rho}(\mathbf{A})$, with $\lambda(j) = j$ and
$\rho(j) = j+1$.
\item $K_{\omega,\omega}^{\lambda,\rho}(\mathbf{A})$, with $\lambda(j) = j+1$ and
$\rho(j) = j$.
\item $K_{n+1,n}^{\lambda,\rho}(\mathbf{A})$, with $\lambda(j) = j$ and
$\rho(j) = j+1$.
\end{enumerate}
Moreover, types {\rm (1)} and {\rm (2)} consist entirely of pseudo
MV-algebras, and the other types contain no pseudo MV-algebras except
the two-element Boolean algebra. A kite of type {\rm (3)} or {\rm
(4)} is good if and only if it is a two-element Boolean algebra. A
kite of type {\rm (5)} is good if and only if $J=\emptyset$.
\end{theorem}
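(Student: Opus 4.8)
The plan is to assemble the structural lemmas already proved into one normal‑form list and then check irredundancy. First I would split on the cardinality of $I$ (equivalently of $J$, since for a subdirectly irreducible kite $I=\lambda(J)\cup\rho(J)$, as observed in the proof of Lemma~\ref{countable-dim}). If $I$ is finite, Lemma~\ref{le:4.4} already does everything: its three clauses are exactly the finite entries (0), (1) and (5) of the present list. So the genuinely new work is the infinite case.

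If $I$ is infinite, Lemma~\ref{countable-dim} forces $I$ and $J$ to be countably infinite, and Lemma~\ref{countable-dim-maps} then leaves precisely three configurations, each of which I would put into normal form by a re‑indexing of $I$ and $J$ (which only replaces the kite by an isomorphic one). When $\lambda$ and $\rho$ are both bijections, identify $J$ with $I$ through $\lambda$ so that $\lambda=\mathrm{id}$; then $\sigma=\rho\circ\lambda^{-1}$ is a permutation of $I$, and the connectivity hypothesis of Theorem~\ref{th:4.3} says $I$ is a single $\sigma$‑orbit. A bijection with one infinite orbit is a two‑sided shift, so $I\cong\mathbb Z$ with $\rho(j)=j+1$: this is type (2). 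When $\lambda$ is a bijection and $|I\setminus\rho(J)|=1$, the same identification gives $\lambda=\mathrm{id}$ and an injection $\rho\colon I\to I$ omitting a single point $p$; since $p$ has no $\rho$‑preimage, connectivity forces $I=\{\rho^{m}(p)\colon m\ge 0\}\cong\omega$ with $p=0$ and $\rho(j)=j+1$, which is type (3). The case with $\rho$ a bijection is symmetric and yields type (4).

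It remains to prove the list is irredundant, i.e. the word ``precisely''. The invariants $|I|,|J|$ and the finite/infinite dichotomy separate the finite entries from one another and from (2)--(4), and they separate (2) from (3),(4) as well, since in (2) both maps are onto $I$ so that $K$ is a pseudo MV‑algebra, whereas (3),(4) are not good. The delicate point, which I expect to be the main obstacle, is distinguishing (3) from (4): here cardinality, goodness and the failure of the pseudo MV‑identity all coincide, so only the left/right asymmetry of the two negations can tell them apart. I would compute the double negations from the division tables: a kite of type (3) satisfies the identity $x^{-\sim}=x$ for every $x$ (automatically on the lower part, and on the upper part $A^I$ because $\lambda$ is onto), but it fails $x^{\sim-}=x$ (the coordinate at $0$, where $\rho^{-1}$ is undefined, is collapsed), whereas type (4) is the mirror image. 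Since $x\mapsto x^{-}$ and $x\mapsto x^{\sim}$ are term operations, these are isomorphism‑invariant equations, so $(3)\not\cong(4)$ whenever $\mathbf A$ is non‑trivial; the same computation shows (3) and (4) are not good.

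Finally I would read off the ``moreover'' statements directly from Theorem~\ref{th:3.3}. Types (1) and (2) have $\lambda(J)=I=\rho(J)$, hence are pseudo MV‑algebras by Theorem~\ref{th:3.3}(2) (equivalently Proposition~\ref{pr:3.2}); in every other type one of $\lambda(J),\rho(J)$ is a proper subset of $I$, so Theorem~\ref{th:3.3}(2) excludes pseudo MV‑algebras apart from the degenerate two‑element Boolean algebra $K_{0,0}^{\emptyset,\emptyset}(\mathbf A)$. The goodness assertions come the same way from Theorem~\ref{th:3.3}(1): a kite of type (3) or (4) has $\lambda(J)\neq\rho(J)$ for non‑trivial $\mathbf A$, so it is good only in the two‑element case, and a kite of type (5), namely $K_{n+1,n}$ with $\lambda(j)=j,\ \rho(j)=j+1$, has $\lambda(J)=\{1,\dots,n\}\neq\{2,\dots,n+1\}=\rho(J)$ unless $J=\emptyset$.
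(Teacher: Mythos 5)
The paper offers no proof of this theorem at all (it explicitly defers to \cite{DvKo}), and your reconstruction---the finite case from Lemma~\ref{le:4.4}, the infinite case from Lemmas~\ref{countable-dim} and \ref{countable-dim-maps} with the connectivity condition of Theorem~\ref{th:4.3} forcing the three infinite normal forms, and Theorem~\ref{th:3.3} for the ``moreover'' clauses---is exactly the intended argument and is correct, including the one genuinely delicate point, namely separating types (3) and (4) by the one-sided double-negation identities $x^{-\sim}=x$ versus $x^{\sim-}=x$ rather than by cardinality or goodness. The only blemish, inherited from the theorem's own loose wording, is your claim that in every type other than (1) and (2) one of $\lambda(J),\rho(J)$ is a proper subset of $I$: this fails for $K_{1,1}^{id,id}(\mathbf{A})$ in type (0), which is a pseudo MV-algebra (not the two-element Boolean algebra) whenever $\mathbf{A}$ is non-trivial.
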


We notice that by Theorem \ref{th:3.3}, the last theorem says that there are also subdirectly irreducible kites that are not good. In addition, it shows that there are subdirectly irreducible kites that are not linear.

\begin{remark}\label{re:1}
{\rm If $\mathbf A$ is trivial, the corresponding kite is a two-element Boolean algebra, therefore, Theorem \ref{th:3.3}, Lemmas \ref{le:4.4}--\ref{countable-dim-maps} and Theorem \ref{classif} are not necessarily valid.
}
\end{remark}

The last result in this section is a Birkhoff's Subdirect Representation type theorem saying that each kite is a subdirect product of subdirectly irreducible ones.

\begin{lemma}\label{subd-repr-gives-subd-repr}
Let $\mathbf{A}$ be a basic pseudo hoop, subdirectly represented as
$\mathbf{A}\leq \prod_{s\in  S}\mathbf{A}_s$. Then
$K_{I,J}^{\lambda,\rho}(\mathbf{A})$ is subdirectly represented as
$K_{I,J}^{\lambda,\rho}(\mathbf{A})\leq
\prod_{s\in S}K_{I,J}^{\lambda,\rho}(\mathbf{A}_s)$.
\end{lemma}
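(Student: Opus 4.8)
The plan is to lift the given subdirect representation of $\mathbf A$ coordinatewise to the kite. First I would unpack the hypothesis: the subdirect embedding $\mathbf A\le \prod_{s\in S}\mathbf A_s$ amounts to a family of \emph{surjective} basic-pseudo-hoop homomorphisms $p_s\colon \mathbf A\to \mathbf A_s$ ($s\in S$) whose joint map $a\mapsto \langle p_s(a)\colon s\in S\rangle$ is injective. For each $s$ I would then define a map $\hat p_s\colon K_{I,J}^{\lambda,\rho}(\mathbf A)\to K_{I,J}^{\lambda,\rho}(\mathbf A_s)$ by applying $p_s$ in every coordinate,
\[\hat p_s(\langle a_i\colon i\in I\rangle)=\langle p_s(a_i)\colon i\in I\rangle,\qquad \hat p_s(\langle \bar f_j\colon j\in J\rangle)=\langle \overline{p_s(f_j)}\colon j\in J\rangle.\]
Since $p_s(1)=1$, this respects the disjoint-union decomposition: it sends the upper part $A^I$ into the upper part $A_s^I$ and the lower part $(\overline A)^J$ into $(\overline{A_s})^J$, and in particular $\hat p_s(0)=0$ and $\hat p_s(1)=1$.

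Next I would verify that each $\hat p_s$ is a homomorphism of pseudo BL-algebras. This is the most laborious step, but it is entirely routine because every kite operation is computed coordinatewise from the operations $\cdot,\ld,\rd$ of $\mathbf A$ together with the fixed combinatorial data $\lambda,\rho,\lambda^{-1},\rho^{-1}$, which $\hat p_s$ leaves untouched. As $p_s$ preserves $\cdot,\ld,\rd$ and $1$, identities such as $p_s(f_j\rd a_{\lambda(j)})=p_s(f_j)\rd p_s(a_{\lambda(j)})$ immediately show that $\hat p_s$ commutes with each of the four products and with each case of $\ld$ and $\rd$, including the mixed cases that feed through $\lambda$ and $\rho$. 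The lattice operations $\wedge$ and $\vee$ require no separate treatment, since in any pseudo BL-algebra they are term-definable from $\cdot,\ld,\rd$ (for $\wedge$ by divisibility, and for $\vee$ by the basic-pseudo-hoop identities recalled in Section~2), so they are preserved automatically. Surjectivity of $\hat p_s$ follows from surjectivity of $p_s$: given an element of $K_{I,J}^{\lambda,\rho}(\mathbf A_s)$ one lifts each coordinate through $p_s$ to obtain a preimage in the appropriate part.

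Finally I would show that the induced map $\Phi\colon x\mapsto \langle \hat p_s(x)\colon s\in S\rangle$ into $\prod_{s\in S}K_{I,J}^{\lambda,\rho}(\mathbf A_s)$ is injective; together with the surjectivity of each factor projection $\hat p_s$, this exhibits $\Phi$ as a subdirect embedding. Let $x\ne y$. If one of $x,y$ lies in the upper part and the other in the lower part, then for every $s$ the images $\hat p_s(x),\hat p_s(y)$ sit in different parts of $K_{I,J}^{\lambda,\rho}(\mathbf A_s)$ and are therefore distinct; here one uses only that the two parts of each factor kite are disjoint, which holds even when some $\mathbf A_s$ is trivial (the factor kite is then the two-element Boolean algebra $\{0,1\}$, whose parts are the disjoint singletons $\{1\}$ and $\{0\}$). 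If instead $x,y$ lie in the same part, they differ in some coordinate $k$, and because the original representation is subdirect its joint map separates the $k$-th coordinates, yielding an $s$ with $\hat p_s(x)\ne \hat p_s(y)$. Hence $\Phi$ is injective.

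The genuinely delicate points are therefore two: the bookkeeping in the homomorphism check for the mixed upper/lower operations (those routed through $\lambda,\rho$ and their partial inverses), and the trivial-factor observation in the cross-part case of injectivity. Everything else follows the coordinatewise pattern, and I expect the homomorphism verification to be the only real obstacle, and even that only in its length rather than its difficulty.
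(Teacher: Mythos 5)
Your proof is correct and follows essentially the same route as the paper, which disposes of the lemma in one line as ``an easy consequence of Lemma \ref{le:4.1}'': there one passes to the normal filters $F_s^I$ built from the kernels $F_s=\ker p_s$, which have trivial intersection and yield $K_{I,J}^{\lambda,\rho}(\mathbf{A})/F_s^I\cong K_{I,J}^{\lambda,\rho}(\mathbf{A}_s)$ --- the filter-theoretic form of your surjections $\hat p_s$. Your version simply makes explicit the coordinatewise homomorphism check and the two-case injectivity argument that the paper leaves to the reader, and both are carried out correctly.
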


\begin{proof}
It is an easy consequence of Lemma \ref{le:4.1}.
\end{proof}

\begin{lemma}\label{le:4.9}
Let $K^{\lambda,\rho}_{I,J}(\mathbf A)$ be a kite pseudo BL-algebra of a basic pseudo hoop  $\mathbf A$. Then
$K^{\lambda,\rho}_{I,J}(\mathbf A)$ is a subdirect
product of the system of kite pseudo effect algebras $(K_{I',J'}^{\lambda',\rho'}(\mathbf A)\colon I'\in \mathcal C(I))$, where $I'$ is any
connected component of $I$, $J'=\lambda^{-1}(I')=\rho^{-1}(I'),$ and $\lambda',\rho'\colon J'\to I'$ are the restrictions of $\lambda$ and $\rho$ to $J'\subseteq I.$
\end{lemma}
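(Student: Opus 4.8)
The plan is to realise the asserted subdirect representation by the obvious restriction map. Write $\mathcal C(I)$ for the set of connected components of $I$; since connectedness is an equivalence relation we get a partition $I=\bigsqcup_{I'\in\mathcal C(I)}I'$. As recalled above, $\lambda^{-1}(I')=\rho^{-1}(I')=:J'$ for each component $I'$, and because $\lambda,\rho$ are injective and the $I'$ are pairwise disjoint, the sets $J'$ partition $J$, i.e. $J=\bigsqcup_{I'}J'=\bigsqcup_{I'}\lambda^{-1}(I')=\lambda^{-1}(I)$; moreover $\lambda(J')\subseteq I'$ and $\rho(J')\subseteq I'$, so $\lambda,\rho$ restrict to injections $\lambda',\rho'\colon J'\to I'$ and $K_{I',J'}^{\lambda',\rho'}(\mathbf A)$ is a well-defined kite for every $I'$.

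Next I would define $\Phi\colon K_{I,J}^{\lambda,\rho}(\mathbf A)\to\prod_{I'\in\mathcal C(I)}K_{I',J'}^{\lambda',\rho'}(\mathbf A)$ sending an upper element $\langle a_i\colon i\in I\rangle$ to the family of restrictions $(\langle a_i\colon i\in I'\rangle)_{I'}$ and a lower element $\langle\bar f_j\colon j\in J\rangle$ to $(\langle\bar f_j\colon j\in J'\rangle)_{I'}$. The central point is that $\Phi$ is a homomorphism of pseudo BL-algebras, and this reduces to the single observation that every operation of the kite combines only coordinates lying in one and the same connected component. Concretely, for $i\in I'$ the indices $\lambda^{-1}(i),\rho^{-1}(i)$, whenever defined, lie in $J'$, and for $j\in J'$ both $\lambda(j),\rho(j)$ lie in $I'$; hence restricting each defining formula for $\cdot,\ld,\rd$ to the coordinates indexed by $I'$ (resp. $J'$) reproduces exactly the corresponding operation of $K_{I',J'}^{\lambda',\rho'}(\mathbf A)$, while $\wedge,\vee$ are computed coordinatewise and $0,1$ plainly restrict to $0,1$. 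I expect this verification, run through the mixed products such as $\langle a_i\rangle\cdot\langle\bar f_j\rangle$ and the lower--lower divisions $\langle\bar f_j\rangle\ld\langle\bar g_j\rangle$, to be the main (though routine) obstacle, since it is precisely there that one must invoke $\lambda^{-1}(I')=\rho^{-1}(I')$ to see that a nontrivial value on a coordinate $i\in I'$ depends only on data indexed by $J'$.

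Finally I would check that $\Phi$ is a subdirect embedding. Injectivity is immediate: two upper elements (resp. two lower elements) with equal image agree on every $I'$ (resp. every $J'$), hence agree everywhere, as the components partition $I$ and $J$; and an upper element can never share an image with a lower element, because in each factor the upper part $A^{I'}$ and the lower part $(\overline A)^{J'}$ are disjoint. Surjectivity of each coordinate projection $\pi_{I'}\circ\Phi$ is obtained by extension: given a target in $K_{I',J'}^{\lambda',\rho'}(\mathbf A)$, fill the coordinates outside $I'$ by $1$ (for an upper element) or outside $J'$ by $\bar 1$ (for a lower element); the resulting element of $K_{I,J}^{\lambda,\rho}(\mathbf A)$ maps onto the target in the $I'$-coordinate. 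Thus $\Phi$ is an injective homomorphism all of whose component projections are onto, which is exactly the assertion that $K_{I,J}^{\lambda,\rho}(\mathbf A)$ is a subdirect product of the family $(K_{I',J'}^{\lambda',\rho'}(\mathbf A)\colon I'\in\mathcal C(I))$. (When $I=\emptyset$ the kite is the two-element Boolean algebra and the statement is read trivially.)
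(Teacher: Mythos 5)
Your proof is correct and follows essentially the same route as the paper: the decomposition is by connected components, the factors are the restricted kites $K_{I',J'}^{\lambda',\rho'}(\mathbf A)$, and your key observation (every operation of the kite only mixes coordinates within a single component, because $\lambda^{-1}(I')=\rho^{-1}(I')=J'$) is exactly what underlies the paper's argument. The only difference is presentational: the paper exhibits normal filters $N_{I'}$ with trivial intersection and identifies the quotients, whereas you build the restriction embedding $\Phi$ directly; the kernel of your projection $\pi_{I'}\circ\Phi$ is precisely the paper's $N_{I'}$, so the two formulations coincide.
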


\begin{proof}
We see that $\lambda',\rho'\colon J'\to I'$ are injections.
Let $I'$ be a connected component of $I$.
Let $N_{I'}$ be the set of all elements
$f = \langle \bar f_j: j \in J\rangle$
such that  $f_j = 1$ whenever $j\in J'$.
It is straightforward to see that $N_{I'}$ is a normal ideal
of $K_{I,J}^{\lambda,\rho}(\mathbf A)$.

It is also not difficult
to see that $K_{I,J}^{\lambda,\rho}(\mathbf A)/N_{I'}$ is isomorphic to
$K_{I',J'}^{\lambda',\rho'}(\mathbf A)$.

As connected components are disjoint, we have
$\bigcap_{I'\in \mathcal C(I)} N_{I'} = \{1\}.$ This proves $K_{I,J}^{\lambda,\rho}(\mathbf A)= K_{I,I}^{\lambda,\rho}(\mathbf A)\leq \prod_{I'\in \mathcal C(I)} K_{I',J'}^{\lambda',\rho'}(\mathbf A).$
\end{proof}

\begin{theorem}\label{th:7.5}
Every kite pseudo effect algebra of a basic pseudo hoop is  a subdirect product of subdirectly irreducible kite pseudo effect algebras.
\end{theorem}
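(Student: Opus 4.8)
The plan is to assemble the two subdirect decompositions already obtained into a single representation and then invoke Theorem~\ref{th:4.3} to certify that every factor is subdirectly irreducible. First I would dispose of the degenerate case: if $\mathbf A=\{1\}$, then by Remark~\ref{re:1} the kite $K_{I,J}^{\lambda,\rho}(\mathbf A)$ is the two-element Boolean algebra, which is simple and hence already subdirectly irreducible, so there is nothing to prove. Thus I may assume $\mathbf A$ is non-trivial. Since the basic pseudo hoops form a variety, Birkhoff's subdirect representation theorem applies to $\mathbf A$, yielding a family $(\mathbf A_s\colon s\in S)$ of subdirectly irreducible basic pseudo hoops together with a subdirect embedding $\mathbf A\le\prod_{s\in S}\mathbf A_s$.

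Next I would feed this representation into Lemma~\ref{subd-repr-gives-subd-repr} to obtain a subdirect embedding
\[
K_{I,J}^{\lambda,\rho}(\mathbf A)\le\prod_{s\in S}K_{I,J}^{\lambda,\rho}(\mathbf A_s),
\]
and then decompose each factor by Lemma~\ref{le:4.9} over the connected components of $I$,
\[
K_{I,J}^{\lambda,\rho}(\mathbf A_s)\le\prod_{I'\in\mathcal C(I)}K_{I',J'}^{\lambda',\rho'}(\mathbf A_s),
\]
where $J'=\lambda^{-1}(I')=\rho^{-1}(I')$ and $\lambda',\rho'$ are the restrictions of $\lambda,\rho$ to $J'$. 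Composing the two displays gives a subdirect embedding of $K_{I,J}^{\lambda,\rho}(\mathbf A)$ into $\prod_{s\in S}\prod_{I'\in\mathcal C(I)}K_{I',J'}^{\lambda',\rho'}(\mathbf A_s)$; here I use the standard fact that a subdirect product of subdirect products is again subdirect, since each coordinate projection onto $K_{I',J'}^{\lambda',\rho'}(\mathbf A_s)$ factors through $K_{I,J}^{\lambda,\rho}(\mathbf A_s)$ as a composite of two surjections and is therefore itself surjective.

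It then remains only to check that every factor $K_{I',J'}^{\lambda',\rho'}(\mathbf A_s)$ is subdirectly irreducible, which is exactly where Theorem~\ref{th:4.3} is applied. By construction each $\mathbf A_s$ is subdirectly irreducible, so the first hypothesis of Theorem~\ref{th:4.3} holds automatically. For the second hypothesis I must verify that, relative to the restricted maps $\lambda',\rho'$, the whole index set $I'$ forms a single connected component; this is immediate because $I'$ was selected as a connected component of $I$ and, since $J'=\lambda^{-1}(I')=\rho^{-1}(I')$, the iterates of $\rho'\circ(\lambda')^{-1}$ and of $\lambda'\circ(\rho')^{-1}$ on $I'$ agree with the restrictions to $I'$ of the corresponding iterates on $I$. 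Hence any two elements of $I'$ are mutually connected, and Theorem~\ref{th:4.3} delivers the subdirect irreducibility of $K_{I',J'}^{\lambda',\rho'}(\mathbf A_s)$, completing the argument.

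I expect no serious obstacle: the theorem is essentially a corollary of the preceding lemmas. The only point demanding care is the bookkeeping in the final step—confirming that connectedness is inherited by the restriction to a component—together with the (routine) transitivity of subdirect representations used to merge the two decompositions; the appeal to Birkhoff's theorem for the base hoop is standard.
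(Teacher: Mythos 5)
Your proposal is correct and follows essentially the same route as the paper: Birkhoff's theorem for the base hoop combined with Lemma~\ref{subd-repr-gives-subd-repr}, then the connected-component decomposition of Lemma~\ref{le:4.9}, with Theorem~\ref{th:4.3} certifying each factor. You merely spell out more explicitly the steps the paper compresses into ``we can reduce (i) to (ii)'', including the routine transitivity of subdirect representations and the inheritance of connectedness under restriction.
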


\begin{proof}
If $\mathbf A$ is a trivial pseudo hoop, the statement is trivial. Take a kite $K_{I,J}^{\lambda,\rho}(\mathbf A)$ for a non-trivial $\mathbf A$.  If the kite is not subdirectly irreducible, by Theorem \ref{th:4.3}, there are two possible cases:
(i) $\mathbf A$ is not subdirectly irreducible, or
(ii) $\mathbf A$ is subdirectly irreducible but
there exist $i,j\in I$ such that, for every integer
$m\ge 0,$ we have $(\rho\circ\lambda^{-1})^m(i)\neq j$ and
$(\lambda\circ\rho^{-1})^m(i)\neq j$. Observe that this happens if and only if
$i$ and $j$ do not belong to the same connected component of $I$.

By Lemma \ref{subd-repr-gives-subd-repr},
we can reduce (i) to (ii). So, suppose $\mathbf A$ is subdirectly irreducible.
Then, using Lemma~\ref{le:4.9}, we can subdirectly embed
$K_{I,J}^{\lambda,\rho}(\mathbf A)$ into
$\prod_{I'} K_{I',J'}^{\lambda',\rho'}(\mathbf A)$, where $I'$
ranges over the connected components of $I,$ $J'=\lambda^{-1}(I')$ and $\lambda', \rho'$ are restrictions of $\lambda, \rho$ to $J'.$ But then each
$K_{I',J'}^{\lambda,\rho}(\mathbf A)$ is subdirectly irreducible by
Theorem \ref{th:4.3}.
\end{proof}

Now we present a simple but important consequence of Theorem \ref{th:7.5}.

\begin{corollary}\label{si-kites-generate}
Let\/ $\mathsf{K}$ be the variety generated by all kites.
Then $\mathsf{K}$ is generated by all subdirectly irreducible kites.
\end{corollary}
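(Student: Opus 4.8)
The plan is to read this off directly from the subdirect representation established in Theorem~\ref{th:7.5}, combined with the elementary closure properties of a variety under subalgebras and products. Write $\mathsf{K}'$ for the variety generated by all subdirectly irreducible kites. Since every subdirectly irreducible kite is in particular a kite, and kites generate $\mathsf{K}$, we trivially have $\mathsf{K}'\subseteq \mathsf{K}$. Hence the entire content of the corollary is the reverse inclusion $\mathsf{K}\subseteq \mathsf{K}'$, and I would structure the argument around proving that single containment.

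For that inclusion I would argue as follows. Because $\mathsf{K}=\mathbf{HSP}$ of the class of all kites, it is the least variety containing every kite; so to conclude $\mathsf{K}\subseteq\mathsf{K}'$ it suffices to check that each individual kite $K_{I,J}^{\lambda,\rho}(\mathbf A)$ already lies in $\mathsf{K}'$. Fix such a kite. By Theorem~\ref{th:7.5} it is a subdirect product of subdirectly irreducible kites; in particular it embeds as a subalgebra into a direct product $\prod_{t}K_t$ in which every factor $K_t$ is a subdirectly irreducible kite. Each $K_t$ belongs to the generating class of $\mathsf{K}'$, hence to $\mathsf{K}'$; since a variety is closed under direct products we get $\prod_t K_t\in\mathsf{K}'$; and since a variety is closed under subalgebras, the subalgebra $K_{I,J}^{\lambda,\rho}(\mathbf A)$ lies in $\mathsf{K}'$ as well. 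Thus every kite is in $\mathsf{K}'$, which forces $\mathsf{K}\subseteq\mathsf{K}'$, and together with the trivial inclusion yields $\mathsf{K}=\mathsf{K}'$.

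There is essentially no genuine obstacle here: the whole difficulty has been pushed into the structural Theorem~\ref{th:7.5}, and what remains is routine $\mathbf{HSP}$-bookkeeping. The one point I would be careful to flag explicitly is that the subdirectly irreducible factors $K_t$ produced by Theorem~\ref{th:7.5} are themselves \emph{kites} (namely the algebras $K_{I',J'}^{\lambda',\rho'}(\mathbf A_s)$ over connected components of subdirectly irreducible basic pseudo hoops), and not merely subdirectly irreducible pseudo BL-algebras of some other shape; it is exactly this feature that licenses placing them inside the generating class of $\mathsf{K}'$. In effect the proof is just Birkhoff's subdirect representation theorem specialized to the subclass of kites, the special input being that kites decompose into subdirectly irreducible \emph{kites}.
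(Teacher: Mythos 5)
Your argument is correct and is exactly the intended one: the paper presents this corollary as an immediate consequence of Theorem~\ref{th:7.5} (with no written proof), and your $\mathbf{HSP}$-bookkeeping — every kite embeds subdirectly into a product of subdirectly irreducible kites, hence lies in the variety they generate — is precisely that consequence spelled out. Your explicit flag that the subdirectly irreducible factors are themselves kites is the right point to emphasize.
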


Theorem \ref{th:7.5} and Corollary \ref{si-kites-generate} can be strengthened in the same way as in \cite[Thm 6.5, Cor 6.6]{DvKo}. Since their proofs follow the same basic steps as those in \cite{DvKo}, we recommend to consult \cite{DvKo} for details of their proofs, if necessary.

Let $\mathbf A$ be a basic pseudo hoop. We introduce the following notions.
A kite $K_{I,J}^{\lambda,\rho}(\mathbf{A})$ will be called
\emph{$n$-dimensional} if $|I| = n\in\mathbb N$, and
\emph{finite-dimensional} if it is $n$-dimensional for some $n$. We
write $\mathcal{K}_n$ for the class of all $n$-dimensional kites and
$\mathsf{K_n}$ for the variety generated by $\mathcal{K}_n$.

\begin{theorem}\label{generation}
The variety $\mathsf{K}$ is generated by all finite-dimensional kites.
\end{theorem}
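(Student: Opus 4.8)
The plan is to reduce the statement to subdirectly irreducible kites via Corollary~\ref{si-kites-generate}, use the classification of Theorem~\ref{classif} to pin down the finitely many shapes that can occur, and then show that the three genuinely infinite-dimensional shapes already satisfy every identity valid in all finite-dimensional kites.

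Write $\mathsf K_{\mathrm{fin}}$ for the variety generated by all finite-dimensional kites. Since a finite-dimensional kite is a kite, $\mathsf K_{\mathrm{fin}}\subseteq\mathsf K$, so only the inclusion $\mathsf K\subseteq\mathsf K_{\mathrm{fin}}$ is at issue. By Corollary~\ref{si-kites-generate}, $\mathsf K$ is generated by its subdirectly irreducible kites, so it suffices to prove that every subdirectly irreducible kite lies in $\mathsf K_{\mathrm{fin}}$. By Theorem~\ref{classif} such a kite is isomorphic to one of the types (0)--(5); types (0), (1) and (5) are finite-dimensional and hence lie in $\mathsf K_{\mathrm{fin}}$ by definition. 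Everything thus reduces to the three infinite-dimensional types (2) $K_{\mathbb Z,\mathbb Z}^{\lambda,\rho}(\mathbf A)$, (3) $K_{\omega,\omega}^{\lambda,\rho}(\mathbf A)$ with $\rho(j)=j+1$, and (4) $K_{\omega,\omega}^{\lambda,\rho}(\mathbf A)$ with $\lambda(j)=j+1$.

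For these I would invoke Birkhoff's theorem: since $\mathsf K_{\mathrm{fin}}$ is the class of all pseudo BL-algebras satisfying every identity that holds in all finite-dimensional kites, it is enough to check that each infinite-dimensional kite $K_\infty$ satisfies all such identities. Arguing contrapositively, suppose an identity $s\approx t$ fails in $K_\infty$ under an assignment of its finitely many variables to elements $u_1,\dots,u_k\in K_\infty$; then $s(\vec u)$ and $t(\vec u)$ differ in some single coordinate, say at $i_0$. The crucial observation is that every kite operation has \emph{bounded reach}: reading the operation tables, each of $\cdot,\ld,\rd$ either acts coordinatewise or transports the coordinate of interest through one of $\lambda,\rho,\lambda^{-1},\rho^{-1}$, which in all three infinite shapes shifts the integer index by at most one. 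Hence the value of any term at $i_0$ depends only on the coordinates of $u_1,\dots,u_k$ lying within distance $d$ of $i_0$, where $d$ is the depth of the term. I would therefore truncate: replace each $u_\ell$ by the element $u_\ell'$ of a finite-dimensional kite obtained by restricting $u_\ell$ to the window of radius $R>\max\{d_s,d_t\}$ around $i_0$ and setting the remaining coordinates to $1$. Placing this window into $K_{n,n}^{\lambda,\rho}(\mathbf A)$ (type~(1)) for type~(2), and into a correspondingly oriented $K_{n+1,n}^{\lambda,\rho}(\mathbf A)$ for types~(3) and (4), with $n$ large, one obtains $s(\vec{u'})_{i_0}=s(\vec u)_{i_0}\ne t(\vec u)_{i_0}=t(\vec{u'})_{i_0}$, so $s\approx t$ already fails in a finite-dimensional kite. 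This follows the pattern of \cite[Thm 6.5, Cor 6.6]{DvKo}.

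The main obstacle is the verification that truncation preserves the value at $i_0$, i.e. that the finite index set can be chosen so that the window avoids the ``seam'' created in passing from the infinite index set to the finite one --- the wrap-around identification $\rho(n-1)\equiv 0$ in the cyclic kites of type~(1), and the artificial right endpoint of the kites of type~(5). The key point is that, since the reach of each term is bounded by $d$, for all sufficiently large $n$ the window of radius $R>d$ around $i_0$ embeds as an arc disjoint from the seam, and on that arc the finite maps $\lambda,\rho$ and their partial inverses agree with those of $K_\infty$; a routine induction on term structure then gives equality of the two evaluations at $i_0$. (This also explains why a naive coordinatewise embedding of the \emph{whole} infinite kite into an ultraproduct of finite kites breaks down for elements of unbounded support, and why one argues identity by identity instead.) Combining everything, every subdirectly irreducible kite lies in $\mathsf K_{\mathrm{fin}}$, whence $\mathsf K=\mathsf K_{\mathrm{fin}}$.
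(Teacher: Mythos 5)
Your argument is essentially the one the paper intends: the paper gives no proof of Theorem~\ref{generation} but defers to \cite[Thm 6.5]{DvKo}, and your route---reduce to subdirectly irreducible kites via Corollary~\ref{si-kites-generate}, invoke the classification of Theorem~\ref{classif}, and dispose of the three infinite-dimensional types by a bounded-reach/truncation argument placing a window into a large finite kite of the matching shape---is exactly that proof's strategy. The only point worth tightening is the phrase ``differ in some single coordinate'': the two sides of a failed identity could instead land in different halves of the kite ($A^I$ versus $(\overline A)^J$), but since the half in which a term evaluates is determined by the halves of its arguments and the shape of the term, truncation preserves this, and the identity still fails in the finite kite in that case as well.
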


\begin{corollary}
The variety $\mathsf{K}$ generated by all kites is the varietal join of
varieties $\mathsf{K}_n$ generated by $n$-dimensional kites. Briefly,
$$
\mathsf{K} = \bigvee_{n=0}^\infty \mathsf{K}_n.
$$
\end{corollary}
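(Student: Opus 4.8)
The plan is to read off the corollary directly from Theorem~\ref{generation} by a standard two-inclusion argument between varieties, using only the monotonicity and idempotence of the variety-generating operator $V=\mathbb{HSP}$ together with the definition of the varietal join $\bigvee_{n=0}^\infty\mathsf{K}_n$ as the smallest variety containing every $\mathsf{K}_n$ (equivalently, $V\bigl(\bigcup_{n=0}^\infty\mathsf{K}_n\bigr)$).

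First I would prove $\bigvee_{n=0}^\infty\mathsf{K}_n\subseteq\mathsf{K}$. Each $\mathsf{K}_n$ is by definition $V(\mathcal{K}_n)$, and every member of $\mathcal{K}_n$ is an $n$-dimensional kite, hence in particular a kite, so $\mathcal{K}_n\subseteq\mathsf{K}$. Since $\mathsf{K}$ is itself a variety and $V$ is monotone and idempotent, applying $V$ gives $\mathsf{K}_n=V(\mathcal{K}_n)\subseteq V(\mathsf{K})=\mathsf{K}$ for every $n$. Thus $\mathsf{K}$ is a variety containing all the $\mathsf{K}_n$, and as the join is the least such variety we obtain $\bigvee_{n=0}^\infty\mathsf{K}_n\subseteq\mathsf{K}$.

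For the reverse inclusion I would invoke Theorem~\ref{generation}, which asserts that $\mathsf{K}$ is generated by all finite-dimensional kites, i.e.\ $\mathsf{K}=V\bigl(\bigcup_{n=0}^\infty\mathcal{K}_n\bigr)$. Every finite-dimensional kite is $n$-dimensional for some $n\in\mathbb{N}$, so it lies in $\mathcal{K}_n\subseteq\mathsf{K}_n\subseteq\bigvee_{m=0}^\infty\mathsf{K}_m$; hence the generating class $\bigcup_{n=0}^\infty\mathcal{K}_n$ is contained in the variety $\bigvee_{m=0}^\infty\mathsf{K}_m$. Applying the operator $V$ once more and using idempotence yields $\mathsf{K}=V\bigl(\bigcup_{n=0}^\infty\mathcal{K}_n\bigr)\subseteq\bigvee_{m=0}^\infty\mathsf{K}_m$. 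Combining the two inclusions gives the desired equality $\mathsf{K}=\bigvee_{n=0}^\infty\mathsf{K}_n$.

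I expect no genuine obstacle here: the entire content of the statement is carried by Theorem~\ref{generation}, and what remains is a routine manipulation of the closure operator $\mathbb{HSP}$. The only point that must be stated cleanly is the reading of the varietal join as the variety generated by the union $\bigcup_{n}\mathsf{K}_n$, after which monotonicity and idempotence of $V$ close both inclusions mechanically.
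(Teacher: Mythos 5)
Your proof is correct and is exactly the routine argument the paper intends: the paper states this corollary without proof as an immediate consequence of Theorem~\ref{generation}, and your two-inclusion argument via monotonicity and idempotence of $\mathbb{HSP}$ is the standard way to fill in that step. Nothing is missing.
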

Finally, we formulate two problems.  Let $\mathcal V$ be a family of bounded pseudo hoops, $\lambda,\rho: J \to I$ be given. (1) Describe the variety $\mathsf V^{\lambda,\rho}_{I,J}(\mathcal V)$ of pseudo BL-algebras generated by the system $\{K^{\lambda,\rho}_{I,J}(\mathbf A)\colon \mathbf A \in \mathcal V\}.$  For example, if $I=\emptyset,$ then by Theorem \ref{classif},  $\mathsf V^{\lambda,\rho}_{I,J}(\{\mathbf A\})$ is the variety of Boolean algebras for any basic pseudo hoop $\mathbf A$.

(2) Describe the variety generated by all kites corresponding to all basic pseudo hoops $\mathbf A \in \mathcal V.$

\section{Conclusion}

We have presented a new construction of pseudo BL-algebras starting with a basic pseudo hoop, two sets, $I$ and $J$, and with two injections $\lambda,\rho: J \to I.$ This construction, Theorem \ref{th:3.1}, is a generalization of that from \cite{DvKo}, where the starting point was a special kind of a basic pseudo hoop~-- the negative cone of an $\ell$-group. It is interesting to note that even from a commutative hoop, we can obtain a non-commutative pseudo BL-algebras. We have characterized cases when the resulting algebra is a pseudo BL-algebra or a good pseudo BL-algebra, Theorem \ref{th:3.3}.

We have classified subdirectly irreducible kites, Theorem \ref{classif}, and we have shown that subdirectly irreducible kites can be found only among those kites with $J$ and $I$ at most countable, Lemma \ref{countable-dim}. We have characterized subdirectly irreducible kites with respect to subdirect irreducibility of the starting basic pseudo hoop, Theorem \ref{th:4.3}, and we have shown an analogue of the Birkhoff Subdirect Representation Theorem for kites,  Theorem \ref{th:7.5}.

This paper gives a method of constructing a new large class of interesting examples of kite pseudo BL-algebras and opens a new window into theory of pseudo BL-algebras.

\end{document}